\documentclass{conm-p-l}
\usepackage{mathrsfs}
\usepackage{amsmath, amsthm, amssymb, enumerate, mathtools}

\usepackage{color}

\numberwithin{equation}{section}

\makeatletter
\@namedef{subjclassname@2020}{\textup{2020} Mathematics Subject Classification}
\makeatother

\title[$1$-Meix\-ner random vectors]{Explicit descriptions of all 
$\,1$-Meix\-ner random vectors}

\author[\rm F\nnh.\,Catrina, A.\,Derdzinski, A.\,I.\,Stan]{Florin
Catrina$^{1}$, Andrzej Derdzinski$^{2}$, and Aurel I.\,Stan$^{3}$}
\address{$^{1}$ Department of Mathematics and Computer Science, St.\,John’s
University, Queens, NY 11439, USA}
\email{catrinaf@stjohns.edu}
\address{$^{2}$\nnh\ Department\nnh\ of\nnh\ Mathematics,\nnh\ The\nnh\
Ohio\nnh\ State\nnh\ University,\nnh\ Columbus,\nnh\ OH\nnh\ 43210,\nnh\ USA}
\email{andrzej@math.ohio-state.edu}
\address{$^{3}$ Department of Mathematics, The
Ohio State University, Marion, OH 43302, USA}
\email{stan.7@osu.edu}

\subjclass[2020]{Primary 81S25; Secondary 81S05, 17C90, 46L53, 42C05}
\keywords{Joint quantum operators, sem\-i-quan\-tum operators, 
$1$-Meix\-ner random vectors, formally real Jor\-dan algebras}

\def\bbC{{\mathchoice {\setbox0=\hbox{$\displaystyle\rm C$}\hbox{\hbox
to0pt{\kern0.4\wd0\vrule height0.9\ht0\hss}\box0}}
{\setbox0=\hbox{$\textstyle\rm C$}\hbox{\hbox
to0pt{\kern0.4\wd0\vrule height0.9\ht0\hss}\box0}}
{\setbox0=\hbox{$\scriptstyle\rm C$}\hbox{\hbox
to0pt{\kern0.4\wd0\vrule height0.9\ht0\hss}\box0}}
{\setbox0=\hbox{$\scriptscriptstyle\rm C$}\hbox{\hbox
to0pt{\kern0.4\wd0\vrule height0.9\ht0\hss}\box0}}}}
\def\sy{\sigma}
\def\lg{\langle}
\def\rg{\rangle}
\def\lr{\lg\,,\rg}
\def\ew{V}
\def\ys{E}
\def\sm{\mathrm{V}\hskip-2pt_m\w}
\def\smo{\mathrm{V}\hskip-2pt_{m-1}\w}
\def\sd{\mathrm{V}\hskip-2.4pt_{d-2}\w}
\def\sdt{\mathrm{V}\hskip-2.4pt_{d-3}\w}
\def\sz{\mathrm{V}\hskip-1.8pt_0\w}
\def\uo{U\hskip-1.9pt_1\w}
\def\ud{U\hskip-2.9pt_d\w}
\def\ui{U\hskip-2.2pt_i\w}
\def\yy{Y\hskip-1.4pt}
\def\yo{Y\hskip-3.2pt_1\w}
\def\yd{Y\hskip-4.3pt_d\w}
\def\yi{Y\hskip-3.2pt_i\w}
\def\yj{Y\hskip-4.2pt_j\w}

\def\bbH{\mathrm{I\!H}}
\def\bbK{\mathrm{I\!K}}
\def\bbN{\mathrm{I\!N}}
\def\bbR{\mathrm{I\!R}}

\def\rto{\bbR\nh^2}

\newcommand{\w}{^{\phantom i}}
\newcommand{\nnh}{\hskip-1pt}

\def\hs{h\hskip-.6pt_6\w}

\usepackage{color}

\def\cwedge{\bigcirc\kern-1.07em\wedge\ }
\newcommand{\hyp}{\hskip.5pt\vbox
{\hbox{\vrule width2.5ptheight0.5ptdepth0pt}\vskip2pt}\hskip.5pt}
\def\vt{{\tau\hskip-4.55pt\iota\hskip.6pt}}
\def\evt{{\tau\hskip-3.55pt\iota\hskip.6pt}} 
\def\hs{\hskip.7pt}
\def\hh{\hskip.4pt}
\def\nh{\hskip-.7pt}
\def\hn{\hskip-.4pt}
\def\w{^{\phantom i}}

\def\ve{\varepsilon}

\newtheorem{thm}{\bf Theorem}[section]
\newtheorem{lem}[thm]{\bf Lemma}
\newtheorem{prop}[thm]{\bf Proposition}
\newtheorem{cor}[thm]{\bf Corollary}

\theoremstyle{definition}

\theoremstyle{remark}
\newtheorem{rem}[thm]{\noindent\bf Remark}

\newtheorem*{pf}{\noindent\it Proof}

\numberwithin{equation}{section}

\begin{document}
\begin{abstract}
The 1-Meix\-ner random vectors consist of $d$ real-valued random 
variables, having finite moments of all orders,
for which the commutators between their joint semi-quantum operators and the multiplication operators that they generate are linear combinations of these multiplication operators and the identity operator. This property 
of random variables imposes a system of first-order partial differential 
equations on their joint La\-place transform on a neighborhood 
of $0$ in a Euclidean $d$-space. 
We provide 
explicit descriptions of all solutions of the system 
just mentioned in every dimension $d$.
\end{abstract}

\maketitle

\setcounter{thm}{0}
\renewcommand{\thethm}{\Alph{thm}}
\section{Introduction}
\setcounter{equation}{0}
The goal of this paper  is to find all solutions of the equation
\begin{equation}\label{nph}
[\nabla\nh\varphi](x)\,=\,\alpha(x,[\nabla\nh\varphi](x))\,
+\,\varphi(x)\hh x\
\end{equation}
imposed on the unknown $\,C^2$ function $\,x\mapsto\varphi(x)\,$ defined on
a sufficiently small connected neighborhood of $\,0\,$ in $\,\ew\nh$, where
the following objects are involved:
\begin{equation}\label{tbg}
\begin{array}{l}
\mathrm{a\hs\ Euclidean\hs\ space\hs\ }\,\hs\ew\hs\,\mathrm{\ of\hs\
dimension\ }\hs\,d\,\ge\hs1\hs\mathrm{\hs\ with\hs\ the\hs\ in}\hyp\\
\mathrm{ner\hh\ product\hh\ }\hh\lr\hh\mathrm{\hh\
and\hh\ a\hh\ bi\-lin\-e\-ar\hs\ 
mapping\hs\ }\,\alpha\nnh:\nnh\ew\nnh\nh\times\nh\ew\nnh\to\hs\ew\\
\mathrm{such\ that\ }\hs\lg\alpha(x,y),z\rg\hs\mathrm{\ is\
totally\ symmetric\ in\ }\hs x,y,z\in\ew\nnh\nnh,\\
\mathrm{where\nh\ we\nh\ also\nh\ write\nh\ 
}\,\alpha_x\w\nh
=\hh\alpha(x,\,\cdot\,)\nnh:\nnh\ew\nnh\to\nh\ew\nnh\mathrm{,\nnh\
for\nh\ any\nnh\ 
}\,x\in\nh\ew\nnh\nnh.
\end{array}
\end{equation}
As shown in \cite{stan-catrina} --- see Proposition~\ref{omreq} 
below -- {\it the
La\-place transform\/ $\,\varphi\,$ of every nondegenerate\/  
$\,1$-Meix\-ner random vector\/ $\,\yy=(\yo\dots,\yd)$, normalized so that}
\begin{equation}\label{nrm}
\yo,\dots,\yd\,\mathrm{\ are\ }\,L\nnh^2\nnh\hyp\mathrm{or\-tho\-nor\-mal\ and\
}\,L\nnh^2\nnh\hyp\mathrm{or\-thog\-o\-nal\ to\ constants,}
\end{equation}
{\it is a solution of equation\/} (\ref{nph}), {\it and\/} $\,\varphi(0)=1$.

The solutions $\,\varphi\,$ of (\ref{nph}) with $\,\varphi(0)=1\,$ naturally
form two classes, described in Theorem~\ref{first}. The present paper also 
establishes a partial converse of the above  italicized statement, by
exhibiting the joint probability distributions of the components of  
such normalized $\,1$-Meix\-ner random vectors corresponding to 
some suitably rescaled versions of solutions belonging to one
the two classes just mentioned.

It is worth pointing out, which we do in Proposition~\ref{unqra}, that
\begin{equation}\label{ivp}
\mathrm{any\ solution\ }\,\varphi\,\mathrm{\ to\ (\ref{nph})\ is\
uniquely\ determined\ by\ }\,\varphi(0),
\end{equation}
as well as real-an\-a\-lyt\-ic, while, unless $\,\varphi\,$ vanishes 
identically,
\begin{equation}\label{unq}
\varphi\,\mathrm{\ uniquely\ determines\ both\ }\,\alpha\,\mathrm{\ and\ the\ inner\ product\ }\,\lr.
\end{equation}
Whenever $\,\varphi\,$ is not identically zero, (\ref{nph}) with 
$\,\sy=\log\hh|\varphi|\,$ reads
\begin{equation}\label{rwr}
[\nabla\nh\sy](x)\,=\,\alpha(x,[\nabla\nh\sy](x))\,+\,x\,\mathrm{\ for\
}\,x\,\mathrm{\ near\ }\,0\,\mathrm{\ in\ }\,V\nnh.
\end{equation}
Section~\ref{tm} describes several trivial ways of modifying given examples 
of $\,\varphi\,$ with (\ref{nph}), for suitable $\,\ew,d,\alpha\,$ and $\,\lr$,
to obtain new examples with the same property.
The first two are the 
$\,\mathrm{O}\hh(\hn d\hh)${\it-equiv\-a\-lence\/} and 
{\it orthogonal di\-rect-sum construction}.
Then, there is {\it rescaling\/}: with
$\,\ew\nh,d\,$ and $\,\lr\,$ in (\ref{tbg}) unchanged, we use any fixed 
$\,c\in\bbR\smallsetminus\{0\}\,$ to replace $\,\alpha$, and a solution
$\,\sy=\log\hh|\varphi|\,$ to (\ref{rwr}), by
\begin{equation}\label{rsc}
\widetilde\alpha=c\hs\alpha\,\mathrm{\ and\ the\ function\
}\,x\mapsto\widetilde\sy(x)=c^{-\nh2}\nh\sy(c\hh x).
\end{equation}
Here is our first main result.
\begin{thm}\label{first}All solutions\/ $\,\varphi\hs$ to\/
{\rm(\ref{nph})}, with their associated data\/ $\,\ew\hskip-2pt,\alpha,\lr$,
arise via the trivial modifications just mentioned from two classes of
examples. In both classes we have
\begin{equation}\label{fcs}
\varphi=ae^\sy\,\mathrm{\ for\ an\ arbitrary\ }\,a\in\bbR\,\mathrm{\
 and\ some\ specific\ function\ }\,\sy\nh.
\end{equation}
In the first class, $\,\ew\nh=\bbR\hn^d\nh$, with any $\,d\ge1$, and\/
$\,\lr\,$ is the
standard inner product, while the components\/  
$\,\alpha\hn_{i\hn jk}\w=\lg\alpha(e\hn_i\w,e\nh_j\w),e\hn_k\w\rg\hs$ of\/  
$\,\alpha\,$ in the standard or\-tho\-nor\-mal basis
$\,e\hn_1\w,\dots,e\nh_d\w$ and the function\/ $\,\sy\hs$ are given by
\begin{equation}\label{cmp}
\begin{array}{l}
\alpha_{ii\hh d}\w=\alpha_{id\hh i}\w=\alpha_{d\hh ii}\w=1\,\mathrm{\ for\
 all\
}\,i\mathrm{,\ and\ }\,\alpha\hn_{i\hn jk}\w=0\hs\mathrm{\ otherwise,}\\
2\sy(x\hn_1\w,\dots,x\nh_d\w)\,=\,-\nh2x\nh_d\w\hs 
-\,\log\hh|(1-x\nh_d\w)^2\nh-\textstyle{\sum}\hs_{i\hh<\hn d}\w\,x_i^2|.
\end{array}
\end{equation}
The second class of examples uses a vector sub\-space\/ $\,\ew\hskip-2.3pt$,
closed under 
the anti\-com\-muta\-tor operation\/ $\,(y,z)\mapsto yz+zy$, of the 
vector space of all self-ad\-joint endo\-mor\-phisms of some given Euclidean 
space\/ $\,\ys\nh$, with $\,\lr,\,\,\alpha\,$ and\/ $\,\sy\,$ defined,
for\/ $\,y,z\in\ew$ and\/ $\,x\,$ near\/ $\,0\,$ in\/ $\,\ew\nh$, by
\begin{equation}\label{scl}
\begin{array}{l}
2\alpha(y,z)\,=\,yz+zy,\quad2\lg y,z\rg\,=\,\mathrm{tr}\,yz,\\
2\hh\sy(x)\,=\,-\mathrm{tr}\,x\,-\,\log\hn|\hh\mathrm{det}\hs(\mathrm{Id}-x)|.
\end{array}
\end{equation}  
\end{thm}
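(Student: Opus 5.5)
The approach is to turn the existence of a solution of (\ref{rwr}) into an algebraic identity for $\,\alpha$, classify the resulting algebras, and then integrate explicitly. By (\ref{ivp}) and (\ref{fcs}) it suffices to treat $\,\varphi\,$ not identically zero, so that $\,\sy=\log|\varphi|\,$ satisfies (\ref{rwr}). For $\,x\,$ near $\,0\,$ the operator $\,\mathrm{Id}-\alpha_x\w\,$ is invertible, so (\ref{rwr}) is equivalent to
\[
[\nabla\sy](x)\,=\,(\mathrm{Id}-\alpha_x\w)^{-1}x\,=\,\textstyle{\sum}_{k\ge0}\,\alpha_x^k\hs x.
\]
Hence a solution exists if and only if the vector field $\,F(x)=(\mathrm{Id}-\alpha_x\w)^{-1}x\,$ is a gradient. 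On a ball this means that $\,dF_x\w\,$ is self-ad\-joint for every $\,x$, and $\,\sy\,$ is then unique up to an additive constant. This is consistent with (\ref{fcs}), where $\,a\,$ absorbs that constant.

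\emph{Step 1: the integrability condition is the Jordan identity.} I will write $\,xy=\alpha(x,y)$, a commutative product for which $\,\lr\,$ is associative by the total symmetry in (\ref{tbg}). I will expand $\,dF_x\w\,$ homogeneously: degree $\,k\,$ comes from differentiating $\,\alpha_x^k x$. The degree-$1$ part is $\,2\alpha_x\w$, which is automatically symmetric. The degree-$2$ part is $\,v\mapsto \alpha_v\w\alpha_x\w x+\alpha_x\w\alpha_v\w x+\alpha_x^2v$. Each $\,\alpha_x\w\,$ is self-ad\-joint, so its symmetry reduces, after pairing with $\,w\,$ and using associativity of $\,\lr$, to $\,\lg x^2v,w\rg$-type identities. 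I expect these to say precisely $\,[L_x\w,L_{x^2}\w]=0$, i.e.\ the Jordan identity $\,(x^2y)x=x^2(yx)$. Conversely, in a Jordan algebra the powers $\,L_x^kx=x^{k+1}\,$ satisfy $\,F(x)=\sum x^{k+1}$, which formally equals $\,x(1-x)^{-1}$. Its derivative is built from the $\,L\,$ and $\,P\,$ operators of the subalgebra generated by $\,x$, and these are self-ad\-joint. This gives the converse, so higher-order conditions follow from degree $2$.

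\emph{Step 2: classify Jordan algebras with a positive definite associative inner product.} This is where I expect the main obstacle. I will split $\,V\,$ orthogonally into ideals; an orthogonal complement of an ideal is again an ideal, by associativity. These ideals correspond to the orthogonal di\-rect-sum construction. First, the annihilator $\,\{x:xV=0\}\,$ is an ideal. Since $\,\lg V^2,x\rg=0\,$ for $\,x\,$ in it, it equals $\,(V^2)^\perp$, and on it $\,\sy=|x|^2/2$. Next I will examine the nil radical $\,N$. Positivity together with associativity should force $\,N^2\nh\subset\,$ an ideal of very restricted shape. Using an idempotent $\,e\,$ and the Peirce decomposition, I expect this to lead exactly to the spin-type structure (\ref{cmp}): $\,e_d^2=e_d$, $\,e_de_i=e_i$ and $\,e_i^2=e_d$, but with $\,e_ie_j=0\,$ for $\,i\ne j$, $\,i,j<d$. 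The remaining semisimple part has a positive definite trace-type form and is therefore formally real. Its simple factors are special, hence realizable as anticommutator-closed spaces of self-ad\-joint endomorphisms. The exception is the Albert algebra, which I would have to handle or check against the phrasing of the statement. On each simple factor the associative form is a positive multiple of the trace form, so after an $\,\mathrm{O}(d)$-equiv\-a\-lence and the rescaling (\ref{rsc}) it becomes the data (\ref{scl}). A degenerate zero-product factor also appears; I expect it to fit into the first class after rescaling, or to be handled as a limiting case, and this needs checking.

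\emph{Step 3: explicit integration.} In the second class, with $\,2\alpha(y,z)=yz+zy$, I set $\,y=(\mathrm{Id}-x)^{-1}-\mathrm{Id}=x(\mathrm{Id}-x)^{-1}$. Then $\,y-\tfrac12(xy+yx)=x(\mathrm{Id}-x)^{-1}-x^2(\mathrm{Id}-x)^{-1}=x$, so $\,\nabla\sy=y$. Moreover $\,2\sy=-\mathrm{tr}\,x-\log|\det(\mathrm{Id}-x)|\,$ has $\,\mathrm{tr}$-gradient $\,2y$, which is the $\,\lr$-gradient $\,y\,$ because $\,2\lg\,,\rg=\mathrm{tr}$. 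For the first class I will verify (\ref{cmp}) directly. Writing $\,x=(u,x_d)$, the product gives $\,\alpha_x\w\,$ explicitly. Solving $\,(\mathrm{Id}-\alpha_x\w)g=x\,$ yields
\[
g_i\w=\frac{x_i\w}{(1-x_d\w)^2-|u|^2}\quad(i<d),\qquad g_d\w=\frac{1-x_d\w}{(1-x_d\w)^2-|u|^2}-1.
\]
This agrees with the gradient of the stated $\,\sy$. Finally, the sums over orthogonal factors add the corresponding functions $\,\sy$, which completes the description.
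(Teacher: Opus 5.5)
\textbf{Steps 1 and 3.} These follow the paper (Lemma~\ref{imply}, Corollary~\ref{jdalg}, Section~\ref{xp}) up to one slip. With $L_x=\alpha_x$, the degree-$2$ part of $dF_x$ is $L_{x^2}+2L_x^2$, which is automatically self-adjoint. The Jordan identity first appears in degree~$3$, where the part is $L_{x\cdot x^2}+L_xL_{x^2}+2L_x^3$.

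\textbf{The gap is Step 2.} That is where the whole classification takes place, you do not carry it out, and the structure you sketch is wrong. The algebra (\ref{cmp}) does not come from a nil radical via a Peirce decomposition: $e_d$ is its unit, and it is semisimple (simple for $d\ge3$). In fact positivity removes the radical outright. The linearized Jordan identity gives $L_{x\cdot x^2}-3L_{x^2}L_x+2L_x^3=0$, so $x^2=0$ forces $L_x^3=0$. Since $L_x$ is self-adjoint, $L_x=0$, that is, $x\in\mathrm{Ann}(V)=(V^2)^\perp$. Thus $V=\mathrm{Ann}(V)\oplus V^2$ orthogonally. Moreover $V^2$ has no nonzero nilpotents, so it is semisimple with a unit $e$, and it is formally real because $\langle\sum x_i^2,e\rangle=\sum|x_i|^2$.

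What remains is the list of all simple formally real Jordan algebras, and no short Peirce-type argument replaces it. The missing ingredient is the Jordan--von Neumann--Wigner classification, which is precisely what the paper invokes after (\ref{fre}). Your assertion that all simple factors are special is also false, exactly for the Albert algebra.

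\textbf{The zero-product summand.} It cannot be absorbed into the first class by rescaling. Rescaling (\ref{rsc}) requires $c\ne0$, and every algebra in either class satisfies $\alpha(x,x)\ne0$ for $x\ne0$. So does every rescaling, $\mathrm{O}(d)$-equivalent copy and orthogonal direct sum of such algebras. Yet $\alpha=0$ gives genuine (Gaussian) solutions $\varphi=ae^{|x|^2/2}$. It has to be listed as a separate building block, formally the limit $c\to0$ of (\ref{rsc}). The paper's argument passes over this case as well, because (\ref{fre}) fails whenever $\mathrm{Ann}(V)\ne0$: a zero product admits an associative inner product but is not formally real.

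\textbf{The Albert algebra.} Your doubt is justified. A subspace of self-adjoint endomorphisms closed under anticommutators is a special Jordan algebra. The $27$-dimensional algebra of octonionic Hermitian $3\times3$ matrices is exceptional, so it does not literally belong to the second class, Remark~\ref{spcas}(iii) notwithstanding. It is covered only if (\ref{scl}) is read in a Euclidean Jordan algebra, with $\mathrm{tr}$ and $\det$ the Jordan-algebraic trace and determinant.
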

Note that all $\,\varphi\,$ in (\ref{fcs}) -- (\ref{cmp}) are 
$\,\mathrm{O}\hh(\hn d-1\hh)$-in\-var\-i\-ant:
\begin{equation}\label{inv}
\varphi=e^{-\xi}\hh[(1-\xi)^2\nnh-s^2]^{-\nnh1/2}\nh\mathrm{,\ for\ 
}\,\xi=x\nh_d\w\mathrm{\ and\ }\,s\ge0\,\mathrm{\ with\  
}\,s^2\nnh=\textstyle{\sum}\hs_{i\hh<\hn d}\w\,x_i^2.
\end{equation}
\noindent\begin{rem}\label{spcas}Important special cases of the
second class of
examples arise when
\begin{enumerate}
\item[{\rm(i)}] $\ew\hs$ is the full space of all self-ad\-joint
endo\-mor\-phisms of
$\,\ys\nnh$, or
\item[{\rm(ii)}] $\ys\hs$ is the underlying real Euclidean space of a
complex or quater\-nion\-ic in\-ner-prod\-uct space, and $\,\ew\nh$ consists
of those self-ad\-joint endo\-mor\-phisms which are also
$\,\bbC$-lin\-e\-ar or, respectively, $\,\bbH$-lin\-e\-ar, with
$\,\mathrm{tr}\,$ and $\,\mathrm{det}\,$ still standing for the
{\it real\/} trace and determinant; or, 
\item[{\rm(iii)}] $\ew\,$ is the space of $\,3\nh\times\nh3\,$ octonion
matrices, self-ad\-joint in a suitable sense, and viewed as 
$\,24\times\nh24\,$ real symmetric matrices, so that  
$\,\ys\nh=\bbR\hn^{24}\nnh$.
\end{enumerate}
\end{rem}
See Section~\ref{pf} for more details.

Let $\,\alpha\,$ be as in (\ref{tbg}). For $\,x\in\ew\nnh$ we set
$\,K[x]=\mathrm{span}\hs\{\alpha_x^nx:n\in\bbN\cup\{0\}\}$, 
where $\,\bbN\,$ is the set of positive integers. 
Thus, since $\,\alpha_x^0=\mathrm{Id}\,$ even for $\,\alpha_x\w=0$,
\begin{equation}\label{hve}
K[x]\,=\,\hs\mathrm{span}\,[\{x\}\hs\cup\hs\{\alpha_x^nx:n\in\bbN\}].
\end{equation}
Our two further results characterize the case where (\ref{nph}) is
nontrivially solvable.
\begin{thm}\label{equiv}Given\/ $\,\alpha\hs$ with\/
{\rm(\ref{tbg})}, three conditions are mutually equivalent\/{\rm:}
\begin{enumerate}
\item[{\rm(i)}] some\/ $\,\varphi$, not identically zero,
satisfies\/ {\rm(\ref{nph})},
\item[{\rm(ii)}] $\alpha_x\w=\alpha(x,\,\cdot\,)\,$ and\/ 
$\,\alpha(\alpha_x^nx,\,\cdot\,)\,$ commute for every\/ $\,x\in\ew\hskip-2pt$
and\/ $\,n\in\bbN$,
\item[{\rm(iii)}] whenever\/ $\,x\in\ew\hskip-2.3pt$, all\/ $\,\alpha_y\w$,
where\/ $\,y\in K[x]$, commute with one another.
\end{enumerate}
\end{thm}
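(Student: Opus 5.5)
The plan is to reduce (i) to an integrability condition for the gradient system (\ref{rwr}), and then to identify that condition with (ii) and, via Jordan-algebra identities, with (iii).

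\emph{Step 1: (i)$\,\Leftrightarrow\,$(ii).} Let $\varphi\not\equiv0$ satisfy (\ref{nph}). By (\ref{ivp}) (Proposition~\ref{unqra}), $\varphi(0)\ne0$, since otherwise $\varphi\equiv0$. So $\sy=\log|\varphi|$ is defined near $0$ and satisfies (\ref{rwr}). Rewrite (\ref{rwr}) as $(\mathrm{Id}-\alpha_x\w)\nabla\sy=x$. For $x$ near $0$ this gives
\[
\nabla\sy(x)=w(x),\qquad w(x)=\textstyle{\sum}_{n\ge0}\,\alpha_x^nx .
\]
Conversely, if the $1$-form $w$ is closed, the Poincar\'e lemma on a ball gives $\sy$, and then $\varphi=e^\sy$ solves (\ref{nph}). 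Hence (i) holds exactly when $Dw(x)$ is self-adjoint for all $x$ near $0$.

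To differentiate, set $A_m=\alpha(\alpha_x^mx,\,\cdot\,)$. Total symmetry gives $\alpha_vy=\alpha_yv$, and it also makes $\alpha_x\w$ and every $A_m$ self-adjoint. Differentiating $\alpha_x^nx$ in the direction $v$ then yields
\[
Dw(x)=\textstyle{\sum}_{n\ge0}\,\alpha_x^n\,+\,\textstyle{\sum}_{k,m\ge0}\,\alpha_x^kA_m .
\]
The first sum is self-adjoint. The second sum is self-adjoint exactly when its homogeneous parts of each degree $n$ in $x$ are, so self-adjointness amounts to
\[
\textstyle{\sum}_{k+m=n-1}\,[\alpha_x^k,A_m]=0\quad\text{for all }n .
\]
These are polynomial identities, so holding near $0$ is the same as holding on all of $\ew$. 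Induct on $n$. The cases $n=1,2$ are automatic because $A_0=\alpha_x\w$. If $[\alpha_x\w,A_m]=0$ for $m<n-2$, then every term with $m<n-2$ vanishes, leaving exactly $[\alpha_x\w,A_{n-2}]=0$. So the whole system is equivalent to (ii).

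\emph{Step 2: (ii)$\,\Leftrightarrow\,$(iii).} The implication (iii)$\Rightarrow$(ii) is immediate, since $x$ and $\alpha_x^nx$ lie in $K[x]$.

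For the converse, define $xy=\alpha(x,y)$, a commutative product on $\ew$. The case $n=1$ of (ii), $[L_x,L_{x^2}]=0$ with $x^2=\alpha_xx$, is precisely the Jordan identity. So $\ew$ is a Jordan algebra, hence power-associative, and $\alpha_x^nx=x^{n+1}$. Thus $K[x]$ is the span of the powers of $x$.

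Next, linearize the Jordan identity and specialize it to powers of $x$. This gives the standard recursion
\[
L_{x^{n+2}}=2L_xL_{x^{n+1}}+L_{x^2}L_{x^n}-L_x^2L_{x^n}-L_{x^{n+1}}L_x\ldots
\]
(the usual formula, up to its exact form). Induction on $n$ then shows that each $L_{x^m}$ is a polynomial in $L_x$ and $L_{x^2}$. Since $L_x$ and $L_{x^2}$ commute, all $\alpha_y$ with $y\in K[x]$ commute, which is (iii).

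\emph{Main obstacle.} I expect the hard part to be this last step: getting the recursion exactly right from the linearized Jordan identity, and checking power-associativity. I would derive the recursion directly from the polarized identity
\[
[L_a,L_{bc}]+[L_b,L_{ca}]+[L_c,L_{ab}]=0,
\]
setting $a=b=x$ and $c=x^n$. The hypotheses $[L_x,L_{x^m}]=0$, which (ii) supplies for every $m$, should absorb any mismatched terms. If that still fails, I would cite the classical Jordan-algebra result directly.
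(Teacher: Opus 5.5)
Your Step~1 is correct and is essentially the paper's Lemma~\ref{imply}. There one factors $Dw(x)=B(x)\,[\alpha(w(x),\cdot\,)+\mathrm{Id}]$ with $B(x)=(\mathrm{Id}-\alpha_x)^{-1}$, so integrability is at once equivalent to $[\alpha_x,\alpha(w(x),\cdot\,)]=0$, and degrees are then separated by $x\mapsto tx$. Your double series with induction on the degree reaches the same conditions, with more bookkeeping.

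Step~2 takes a genuinely different route. The paper obtains (iii) without any Jordan identities, via Lemma~\ref{diago}. At a generic $x$, where $\alpha_x$ has the maximal number of distinct eigenvalues, each $\alpha_y$ with $y\in K[x]$ near $x$ commutes with $\alpha_x$. It must therefore be scalar on every eigenspace of $\alpha_x$, since otherwise it would have more distinct eigenvalues. This extends to all of $K[x]$, so these operators are simultaneously diagonal, and density of such $x$ plus polynomial dependence on $x$ finishes the argument. Your algebraic route avoids eigenvalue counting and density. In the version driven by the Jordan identity alone, it shows that the case $n=1$ of (ii) already implies (iii), thereby absorbing Corollary~\ref{jdalg}. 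The price is dependence on power-associativity and on the commutation of all $L_{x^p}$ in a Jordan algebra.

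That last step is exactly where your own sketch fails. The displayed recursion is wrong. Moreover, setting $a=b=x$, $c=x^n$ in the polarized identity cannot yield any recursion, since $x^{n+2}$ never occurs in it. What you get is $2[L_x,L_{x^{n+1}}]+[L_{x^n},L_{x^2}]=0$. Together with (ii) this gives only $[L_{x^2},L_{x^n}]=0$, and says nothing about, e.g., $[L_{x^3},L_{x^4}]$. So as written, this step rests entirely on your fallback citation. That citation is legitimate: it is the Faraut--Kor\'anyi fact the paper itself uses for Corollary~\ref{jdalg}.

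The actual recursion comes from the other linearization. Differentiate $x(x^2y)=x^2(xy)$ in $x$ along $z$ and read the result as an operator identity in $z$; this gives
\[
L_{x^2y}=L_{x^2}L_y+2L_{xy}L_x-2L_xL_yL_x ,
\]
so $L_{x^{n+2}}=L_{x^2}L_{x^n}+2L_{x^{n+1}}L_x-2L_xL_{x^n}L_x$ once power-associativity is known.

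Simpler still, since you assume (ii) for every $n$, no citation is needed at all:
\begin{enumerate}
\item Put $x^{(m)}=\alpha_x^{m-1}x$.
\item Induction on $j\ge2$, using $[\alpha_x,\alpha_{x^{(k)}}]=0$, gives $x^{(j)}x^{(k)}=\alpha_{x^{(k)}}\alpha_x x^{(j-1)}=\alpha_x\alpha_{x^{(k)}}x^{(j-1)}=\alpha_x x^{(j+k-1)}=x^{(j+k)}$.
\item Take $a=x$, $b=x^{(j)}$, $c=x^{(k)}$ in the polarized identity. Its first term $[L_x,L_{x^{(j+k)}}]$ vanishes by (ii), leaving $[L_{x^{(j)}},L_{x^{(k+1)}}]=[L_{x^{(j+1)}},L_{x^{(k)}}]$.
\item Shifting repeatedly gives $[L_{x^{(p)}},L_{x^{(q)}}]=[L_x,L_{x^{(p+q-1)}}]=0$, which is (iii).
\end{enumerate}
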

\noindent\begin{cor}\label{jdalg}For\/ $\,\alpha\hs$ as in\/ {\rm(\ref{tbg})},
equation\/ {\rm(\ref{nph})} has a nontrivial
solution\/ $\,\varphi$
if and only if\/ $\,\ew\hskip-2pt$ with the multiplication $\,\alpha$
constitutes a Jor\-dan algebra.
\end{cor}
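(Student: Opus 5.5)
The plan is to apply Theorem~\ref{equiv} and translate its conditions (ii) and (iii) into the language of Jordan algebras. Recall that a (real) Jordan algebra is a commutative algebra whose product satisfies the Jordan identity $(x^2y)x=x^2(yx)$. Commutativity of the multiplication $\alpha$ is automatic from (\ref{tbg}), since $\lg\alpha(x,y),z\rg$ is symmetric in $x,y$ and $\lr$ is nondegenerate. Writing $x^2=\alpha(x,x)=\alpha_x\w x$, the Jordan identity says exactly that $\alpha_x\w$ and $\alpha(\alpha_x\w x,\,\cdot\,)$ commute for every $x\in\ew$.

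\emph{Only if.} Suppose (\ref{nph}) has a solution $\varphi$ that is not identically zero. By Theorem~\ref{equiv}, (i) implies (ii). Taking $n=1$ in (ii) gives precisely the Jordan identity, so $(\ew,\alpha)$ is a Jordan algebra.

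\emph{If.} Suppose $(\ew,\alpha)$ is a Jordan algebra. We verify condition (iii) of Theorem~\ref{equiv}; its equivalence with (i) then yields a nontrivial solution. Two standard facts about Jordan algebras are needed (see, e.g., Jacobson's or Faraut--Kor\'anyi's monographs).
\begin{enumerate}
\item[(a)] Jordan algebras are power-associative. In particular the powers $x^{n+1}$ are well defined, and an induction gives $\alpha_x^nx=x\cdot x^n=x^{n+1}$. Hence $K[x]$ in (\ref{hve}) is the span of $x,x^2,x^3,\dots$
\item[(b)] For every $x$, the multiplication operators $L_{x^m}$ with $m\ge1$ pairwise commute.
\end{enumerate}
Since $y\mapsto\alpha_y\w$ is linear, (b) implies that all $\alpha_y\w$ with $y\in K[x]$ commute with one another. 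This is exactly (iii).

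The main obstacle is fact (b), which does not follow directly from the Jordan identity. If a self-contained argument is preferred over citing the literature, I would proceed as follows. First linearize the Jordan identity to obtain the operator relation
\[
[L_{x^2},L_y]+2[L_{xy},L_x]=0 .
\]
Then substitute $y=x^{k}$ and use induction on $k$ to show that each $L_{x^{k}}$ is a polynomial in $L_x$ and $L_{x^2}$. Since $L_x$ and $L_{x^2}$ commute by the Jordan identity itself, all these polynomials commute, which gives (b) and hence the corollary.
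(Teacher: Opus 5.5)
Your argument is correct and is essentially the paper's own. The \emph{only if} part is (i)$\Rightarrow$(ii) of Theorem~\ref{equiv} with $n=1$. The \emph{if} part rests on the same standard fact that the paper cites from Faraut--Kor\'anyi, p.~27: the operators $L_{x^m}$, $m\ge1$, commute. The paper uses this fact to verify (ii) rather than your (iii), which makes no difference.

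One caution about your optional self-contained proof of (b): as written it would not work. Substituting $y=x^k$ into $[L_{x^2},L_y]+2[L_{xy},L_x]=0$ gives only $2[L_{x^{k+1}},L_x]=-[L_{x^2},L_{x^k}]$, which is a relation among commutators. It yields no formula for $L_{x^{k+1}}$ itself. It also does not give $[L_{x^2},L_{x^{k+1}}]=0$, which you would need at the next step of the induction.

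The missing idea is to read the same linearized trilinear identity $y(x^2z)+2x((xy)z)=x^2(yz)+2(xy)(xz)$ as an operator identity in $y$ rather than in $z$. This gives $L_{x^2z}=L_{x^2}L_z+2L_{xz}L_x-2L_xL_zL_x$. Now take $z=x^k$. Since $x^k=L_x^{k-1}x$, the Jordan identity gives $x^2x^k=L_x^{k-1}L_{x^2}x=x^{k+2}$. You then obtain the recursion $L_{x^{k+2}}=L_{x^2}L_{x^k}+2L_{x^{k+1}}L_x-2L_xL_{x^k}L_x$. Induction then shows that every $L_{x^m}$ is a polynomial in the commuting operators $L_x$ and $L_{x^2}$, which is (b).
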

We use the two-sid\-ed, or bilateral, La\-place transform
(no restriction of integration to a ``positive'' quadrant or
half-line), with the same sign convention as 
Rot\-haus \cite{rothaus}. Thus, a Bo\-rel measure $\,\mu\,$
on a Euclidean $\,d$-space $\,\ew\,$ has the La\-place transform
$\,\varphi\,$ defined on a suitable (necessarily convex)
sub\-set of $\,\ew$ by
\begin{equation}\label{lap}
\varphi(x)
=\nnh\displaystyle{\int}_{\hskip-4.5pt\ew}\w
e^{\lg x,y\rg}\hh d\mu\nh_y\w.
\end{equation}
When $\,\mu\,$ is absolutely continuous with respect to the Le\-besgue
measure, we treat it as a function, by identifying it with its
Ra\-don–Ni\-ko\-dym derivative.

In the next theorem, the $\,\mathrm{O}\hh(\hn d-1\hh)$-in\-var\-i\-ance of 
$\,f\,$ reflects the same property of its La\-place transform, in the form of
a rescaled version of (\ref{inv}).

The symbol $\,u\,$ always denotes 
the unit-step (Heaviside) function.
\begin{thm}\label{frcls}In the first class of examples of 
Theorem\/~{\rm\ref{first}}, the rescaled version of
$\,\varphi\,$ with\/ {\rm(\ref{fcs})} -- {\rm(\ref{cmp})}, for every
dimension\/ $\,d\ge2\,$ and any\/
$\,c\,$ in\/ {\rm(\ref{rsc})} such that\/ $\,(d+2)\hh c^2\in(0,1]$, 
has the inverse La\-place transform\/ $\,f\,$ which 
depends on
\begin{equation}\label{yey}
y=(y\hn_1\w,\dots,y\hn_d\w)\,\mathrm{\ only\ through\
}\,\eta=y\nh_d\w\mathrm{\ and\ }\,r\ge0\,\mathrm{\
given\ by\ }\,r^2\nnh=\textstyle{\sum}\hs_{i\hh<\hn d}\w\,y_i^2.
\end{equation}
Specifically, with a constant\/ $\,\kappa\,$ depending on\/ $\,d\,$ and\/
$\,c$,
\begin{equation}\label{slt}
f\hs
=\,\kappa\hs e^{-\eta/c}\hh[(1+c\hh\eta)^2\nnh-\hs c^2\nh r^2]\hh^{p/2}
\hs u(1+c\hh\eta-|c|r)\mathrm{,\ where\ }\,p=-d+1/c^2\nh.
\end{equation}
\end{thm}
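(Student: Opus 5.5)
The plan is to prove the statement in the forward direction. I will compute the bilateral La\-place transform (\ref{lap}) of the function $\,f\,$ in (\ref{slt}) and check that it equals the rescaled $\,\varphi\,$ near $\,0$. The inverse La\-place transform is unique wherever the transform converges on an open set, so this is enough.

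\emph{Target function.} By (\ref{rsc}) and (\ref{inv}), the rescaled solution is
\[
\widetilde\varphi(x)=a\hs e^{\sy(cx)/c^2}=a\hs e^{-\xi/c}\hh[(1-c\hh\xi)^2-c^2s^2]^{-1/(2c^2)},
\]
with $\,\xi=x\nh_d\w\,$ and $\,s^2=\sum_{i<d}x_i^2$. I first treat $\,c>0$. The case $\,c<0\,$ should follow from the reflection $\,y\mapsto -y$, combined with the evident symmetry of (\ref{slt}), which involves only $\,|c|\,r$. This reduction needs a separate check.

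\emph{Change of variables.} In $\,\int e^{\lg x,y\rg}f(y)\,dy\,$ I substitute $\,t=1+c\hh\eta\,$ and $\,z=c\hh y'$, where $\,y'=(y\hn_1\w,\dots,y\hn_{d-1}\w)$. The support condition $\,u(1+c\eta-cr)\,$ becomes the Lorentz cone $\,\Lambda=\{(t,z):t\ge|z|\}$. The factor $\,e^{-\eta/c}\,$ becomes $\,e^{1/c^2}e^{-t/c^2}$. The exponent becomes
\[
\lg x,y\rg=\xi(t-1)/c+\lg x',z\rg/c .
\]
Collecting terms, the transform equals
\[
\kappa\hs c^{-d}\hh e^{1/c^2}e^{-\xi/c}\int_\Lambda e^{-(\lambda t-\lg w,z\rg)}\hh(t^2-|z|^2)^{p/2}\,dt\,dz,
\qquad \lambda=\frac{1-c\hh\xi}{c^2},\quad w=\frac{x'}{c}.
\]
For $\,x\,$ near $\,0$, we have $\,\lambda>|w|$.

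\emph{Gamma integral of the Lorentz cone.} The key step is the identity
\[
\int_\Lambda e^{-(\lambda t-\lg w,z\rg)}(t^2-|z|^2)^{q}\,dt\,dz=C_{d,q}\,(\lambda^2-|w|^2)^{-q-d/2}\qquad(\lambda>|w|),
\]
valid whenever $\,C_{d,q}<\infty$. I would prove it in two moves.
\begin{enumerate}
\item[(a)] A Lorentz boost preserves $\,\Lambda$, the form $\,t^2-|z|^2\,$ and Lebesgue measure. Applying one reduces the integral to the case $\,w=0$, with $\,\lambda\,$ replaced by $\,(\lambda^2-|w|^2)^{1/2}$.
\item[(b)] For $\,w=0$, the scaling $\,(t,z)\mapsto(t,z)/\lambda\,$ gives the factor $\,\lambda^{-2q-d}$. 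The remaining constant is computed in polar coordinates in $\,z\,$ as a Beta integral times $\,\Gamma(2q+d)$.
\end{enumerate}
Finiteness requires $\,q>-1$, from the integrability of $\,(t^2-|z|^2)^q\,$ near the boundary of the cone.

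With $\,q=p/2\,$ we get $\,q+d/2=1/(2c^2)$. The condition $\,q>-1\,$ reads $\,1/c^2>d-2$, which is implied by the hypothesis $\,(d+2)\hh c^2\le1$. Moreover,
\[
(\lambda^2-|w|^2)^{-1/(2c^2)}=\mathrm{const}\cdot[(1-c\hh\xi)^2-c^2s^2]^{-1/(2c^2)},
\]
so the transform has exactly the shape of $\,\widetilde\varphi$. Choosing $\,\kappa\,$ so that the value at $\,x=0\,$ equals $\,a\,$ completes the match.

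I expect the main obstacle to be the rigorous justification of the boost reduction, together with convergence of the integral on a full neighborhood of $\,0$. Specifically, one must check that $\,\lambda t-\lg w,z\rg\ge(\lambda-|w|)\hh t\,$ on $\,\Lambda$, so that the integral converges absolutely there. One must also check that the moments of $\,f\,$ needed for the normalization (\ref{nrm}) are finite. The latter is presumably where the stronger hypothesis $\,(d+2)\hh c^2\le1\,$ enters, and I would check it by a similar Beta-function computation.
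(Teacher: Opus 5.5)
Your proposal is correct, and it follows a genuinely different route from the paper's. The paper never evaluates (\ref{lap}) for $f$. Instead it checks that $f$ satisfies the first-order system (\ref{amt}) and transfers this, via the rules (\ref{ltr}) (differentiation under the integral and integration by parts), to the system (\ref{rea}) for $\widehat f$. It then recognizes (\ref{rea}) as (\ref{omx}) for $c\,\alpha$ and concludes by the uniqueness (\ref{ivp}). The estimates (\ref{red})--(\ref{fin}) and the vanishing of the boundary terms are exactly where the paper needs $p\ge 2$, i.e.\ $(d+2)c^2\le 1$.

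Your boost-plus-scaling evaluation of the Lorentz-cone Gamma integral replaces all of this with a direct computation, and it gives more. First, it needs only $p/2>-1$, i.e.\ $(d-2)c^2<1$, so the hypothesis in the statement is an artifact of the paper's method. At $d=3$, $c^2=1$ your constant diverges, which matches the singular measure in (\ref{ltf}). Second, it produces $\kappa$ explicitly: $C_{d,p/2}=\tfrac12\mathrm{V}_{d-2}\,\Gamma(1/c^2)\,\mathrm{B}(\tfrac{d-1}{2},\tfrac p2+1)$, with $\mathrm{V}_{d-2}$ the volume of $S^{d-2}$. Hence $\kappa=|c|^{d-2/c^2}e^{-1/c^2}/C_{d,p/2}$ for $\widetilde\varphi(0)=1$; for $d=2$, $c=1$ this gives the factor $(\pi e)^{-1}$ of (\ref{fxx}). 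What the paper's route buys in exchange is that it uses only the differential equations and their uniqueness property, with no need to exploit the Lorentz symmetry of the support.

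None of the points you flag as obstacles is a real one:
\begin{enumerate}
\item[(i)] The case $c<0$ is immediate. Write $f_c,\widetilde\varphi_c$ for the objects attached to $c$. For $b>0$ one has $f_{-b}(y',\eta)=f_b(y',-\eta)$ and $\widetilde\varphi_{-b}(x',\xi)=\widetilde\varphi_b(x',-\xi)$, with the same $p$.
\item[(ii)] Your integrand is nonnegative, so the boost and the scaling are justified by Tonelli in $[0,\infty]$. Finiteness of $C_{d,q}$ then gives absolute convergence on the whole half-cone (\ref{hfc}) at once.
\item[(iii)] The moment check is unnecessary. The statement does not involve (\ref{nrm}), and finiteness of $\widehat f$ near $0$ already yields all moments. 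So your guess about where $(d+2)c^2\le 1$ enters is off: in the paper it enters only through $p\ge 2$.
\end{enumerate}
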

To prove Theorem~\ref{frcls}, in Sect.\,\ref{pl}, we use the uniqueness 
of a solution\ $\,\varphi\,$ to (\ref{nph}) with $\,\varphi(0)=1$,
mentioned in (\ref{ivp}), which
relieves us of a rather tedious task: 
evaluating the La\-place transform of $\,f\,$ directly
from (\ref{lap}).

For $\,d\in\{1,2,3\}$, the inverse La\-place transforms
of the original functions $\,\varphi$ with (\ref{fcs}) -- (\ref{cmp})
and $\,\varphi(0)=1$, not subjected to any rescaling, were described in
\cite{stan-catrina}. 
We provide concise
versions of those three descriptions in the Appendix.

The paper is organized as follows. We devote the final three sections to  
reviewing briefly the
joint quantum and sem\-i-quan\-tum operators generated by a finite family of
random variables having finite moments of all orders, and 
stating the result from \cite{stan-catrina} which shows that equation 
(\ref{nph}) holds for the joint La\-place transforms of the components of all 
suitably normalized nondegenerate $\,1$-Meix\-ner random
vectors. That material is preceded by comments concerning intrinsic
aspects of (\ref{nph}), unrelated to its quan\-tum-prob\-a\-bil\-i\-ty
interpretation, and proofs of the results listed above. To be specific, 
Sections\,~\ref{xp} through \ref{pd} contain a quick verification of 
(\ref{nph}) for the examples described in Theorem~\ref{first},
a description of the trivial modifications of solutions to (\ref{nph}), and
a proof of Theorem~\ref{equiv}. To prove the uniqueness assertion of 
Theorem~\ref{first}, in Section~\ref{pf}, we first trivially
derive Corollary~\ref{jdalg} from Theorem~\ref{equiv}. We then point out 
that, due to (\ref{tbg}), any Jor\-dan algebra arising in 
Corollary~\ref{jdalg} is of the Euclidean type, and hence formally real. 
In their seminal 1934 paper \cite{jordan-von-neumann-wigner},
Pascual Jor\-dan, John von Neu\-mann and Eugene Wigner classified 
all formally real Jor\-dan algebras, and the uniqueness claim in 
Theorem~\ref{first} is an immediate consequence of their 
classification. 
Finally, in Section~\ref{pl}, we prove Theorem~\ref{frcls}.

The present paper leaves unanswered two questions, to be addressed in the 
future. One concerns a possible converse of \cite[Lemma 6]{stan-catrina}, 
that is, of Proposition~\ref{omreq}, also stated in the lines surrounding 
(\ref{nrm}). More precisely, let an $\,\bbR\hn^d\nh$-val\-ued random variable 
$\,\yy=(\yo\dots,\yd)\,$ on a probability space 
have finite moments of all orders 
and satisfy the normalizing condition 
(\ref{nrm}). For the La\-place transform $\,\varphi\,$  
of the resulting push-for\-ward Bo\-rel measure 
on $\,\bbR\hn^d\nh$,
{\it does equation\/ {\rm(\ref{nph})} with any suitable data\/ {\rm(\ref{tbg})}
guarantee that\/ $\,\yy\hs$ is a  $\,1$-Meix\-ner random vector}? The
other question is: {\it Can the\/ $\,1$-Meix\-ner condition be characterized
by certain group invariance, for instance, in the spirit of\/ \cite{letac-89} 
and\/} \cite{letac-94}?

\renewcommand{\thethm}{\thesection.\arabic{thm}}
\renewcommand{\theprop}{\thesection.\arabic{prop}}
\section{The examples described in Theorem~\ref{first}}\label{xp}
\setcounter{equation}{0}
For $\,\alpha\,$ appearing in (\ref{cmp}), equation (\ref{rwr}) obviously
amounts to
\begin{equation}\label{omx}
(1-x\nh_d\w)\hh\sy\nnh_i\w=(1+\sy\hskip-2.3pt_d\w)x\nh_i\w\,\mathrm{\ if\
}\,i<d\,\mathrm{\ and\ }\,(1-x\nh_d\w)\hh\sy\hskip-2.3pt_d\w
=x\nh_d\w+\textstyle{\sum}\hs_{i\hh<\hn d}\w\,x\nh_i\w\sy\nnh_i\w,
\end{equation}
with subscripts applied to $\,\sy$ denoting partial differentiations.
The relations (\ref{omx}) are easily verified to hold if $\,\sy$ is
defined by (\ref{cmp}).

When $\,\alpha\,$ is given by (\ref{scl}), equation (\ref{rwr}) takes the form
\begin{equation}\label{rds}
2\hh[\hh d_y\w\sy](x)=[\hh d_{xy+yx}\w\sy](x)+\mathrm{tr}\,xy\,\mathrm{\ 
for\ any\ }\hs y\hn\in\nh\ew\hs\mathrm{\ and\ }\hs x\hn\in\nh\ew\hs\mathrm{\
close\ to\ }\hs0. 
\end{equation}  
In the case of a real vector space $\,\ys\,$ of any finite dimension, 
$\,\theta:\mathrm{GL}\hh(\ys)\to\bbR\,$ defined by
$\,\theta(x)=\log\hn|\hh\mathrm{det}\,x|\,$ satisfies, whenever 
$\,x\in\mathrm{GL}\hh(\ys)\,$ and $\,y\in\mathfrak{gl}\hh(\ys)$, the
following well-known relation (see Remark~\ref{dettr} below):
\begin{equation}\label{dvt}
  [\hh d_y\theta\hh](x)\,\hs=\,\,\mathrm{tr}\,x^{-\nnh1}\nh y.
\end{equation}
With $\,\sy\,$ as in (\ref{scl}), using (\ref{dvt}) we get 
$\,2\hh[\hh d_z\w\sy](x)=-\mathrm{tr}\,z
+\mathrm{tr}\hs(\mathrm{Id}-x)^{-\nnh1}\nnh z\,$ 
for $\,x\,$ near $\,0\,$ and any $\,z\in\ew$. Applied to $\,z=y\,$ and
$\,z=xy+yx$, this gives
\[
\begin{array}{l}
2\hh[\hh d_y\w\sy](x)-[\hh d_{xy+yx}\w\sy](x)-\mathrm{tr}\,xy
=[-\mathrm{tr}\,y+\mathrm{tr}\hs(\mathrm{Id}-x)^{-\nnh1}\nnh y]\\
\hskip156pt+\,[\mathrm{tr}\,xy-\mathrm{tr}\hs(\mathrm{Id}-x)^{-\nnh1}\nnh xy]
-\mathrm{tr}\,xy.
\end{array}
\]
Canceling, on the right-hand side,
the two occurrences of $\,\mathrm{tr}\,xy\,$ with opposite signs,
we see that the right-hand side equals the inner product of $\,2y\,$ and
\[
-\mathrm{Id}\,+\,(\mathrm{Id}-x)^{-\nnh1}\hs-\,(\mathrm{Id}-x)^{-\nnh1}\nnh x\,
=\,-\mathrm{Id}\,+\,(\mathrm{Id}-x)^{-\nnh1}\nh(\mathrm{Id}-x)\,=\,0,
\]
proving (\ref{rds}) and, consequently, (\ref{rwr}).
\begin{rem}\label{dettr}For any $\,z\in\mathfrak{gl}\hh(\ys)$,
applying $\,d/dt\,$ at $\,t=0\,$ to
$\,\mathrm{det}\,\hs\mathrm{exp}\hskip1.2ptt\hn z$ we
get $\,\mathrm{tr}\,z$, since
$\,\mathrm{tr}:\mathfrak{gl}\hh(\ys)\to\bbK\,$ is the Lie-al\-ge\-bra 
homo\-mor\-phism induced by the Lie-group
homo\-mor\-phism
$\,\mathrm{det}:\mathrm{GL}\hh(\ys)\to\bbK\smallsetminus\{0\}$.
(Equivalently, $\,\mathrm{tr}\,$ is the differential of $\,\mathrm{det}\,$
at $\,\mathrm{Id}$, since $\,-z\,$ has, in dimension $\,m$,
the characteristic polynomial
$\,t\mapsto(-\nnh1)^m[\hs t^m\nnh+(\mathrm{tr}\,z)\hh t^{m-1}]$ plus
low\-er-or\-der terms,
and so $\,\mathrm{det}\hs(\mathrm{Id}+tz)$ equals
$\,1+(\mathrm{tr}\,z)\hh t\,$ plus high\-er-or\-der terms.) Using
the curve $\,t\mapsto x\,\mathrm{exp}\,tx^{-\nnh1}\nh y$, which for
$\,t=0\,$ arrives at $\,x\,$ with the velocity $\,y$, 
one easily obtains (\ref{dvt}).
\end{rem}

\section{Trivial modifications}\label{tm}
\setcounter{equation}{0}
We begin by justifying the claims (\ref{ivp}) and (\ref{unq}) made in
the Introduction. First, concerning the operator
$\,\mathrm{Id}-\alpha_x\w:\ew\to\ew\hskip-3pt$, depending as in (\ref{tbg}) on
$\,x\in\ew\hskip-3pt$,
\begin{equation}\label{ivt}
\mathrm{Id}-\alpha_x\w\mathrm{\ is\ invertible\ for\ }\,x=0\mathrm{,\ and\
hence\ for\ all\ }\,x\,\mathrm{\ near\ }\,0,
\end{equation}
so that (\ref{nph}) is in normal form (solved for the derivatives), and
(\ref{ivp}) follows.
\begin{prop}\label{unqra}If\/ {\rm(\ref{rwr})} has a\/ $\,C^2$ solution\/ 
$\,\sy$ on a neighborhood of\/ $\,0\,$ in\/ $\,\ew\hskip-2.5pt$,
then\/ $\,\sy$ is
real-an\-a\-lyt\-ic, uniquely determined by\/ $\,\alpha$ up to an 
additive constant, and, explicitly, $\,\sy(x)=\lg x,\gamma\nnh_x\w x\rg$,
where, with the notation of\/ {\rm(\ref{tbg})}, $\,\gamma\nnh_x\w$ is the
linear operator\/ $\,\ew\to\,\ew\,$ given by
\begin{equation}\label{xpl}
\gamma\nnh_x\w=\nnh\displaystyle{\int}_{\hskip-4.5pt0}^1t\,[\hh\mathrm{Id}
-t\alpha_x\w]^{-\nnh1}\hn dt\,\mathrm{\ or,\ equivalently,\ 
}\,\gamma\nnh_x\w
=\hn\mathop{\displaystyle{\sum}}\limits_{n\hs=\,0}^\infty
(n+2)^{-\nnh1}\nh\alpha_x^n.
\end{equation}
Furthermore, $\,\sy\hn$ uniquely determines both\/ $\,\alpha$ and the 
inner product\/ $\,\lr$, while\/ $\,\sy$ has at\/ $\,0\,$ a strict local 
minimum, with the Hess\-i\-an equal to\/ $\,\lr$.
\end{prop}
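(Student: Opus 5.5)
Since $\mathrm{Id}-\alpha_x$ is invertible for $x$ near $0$ by (\ref{ivt}), equation (\ref{rwr}) is equivalent to the normal form $\nabla\sy(x)=[\mathrm{Id}-\alpha_x]^{-1}x$. So the plan is to verify that $\sy(x)=\lg x,\gamma_x x\rg$ satisfies this gradient identity. Uniqueness up to an additive constant is then immediate, because $\nabla\sy$ is prescribed on a connected neighborhood of $0$.

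To compute $\nabla\sy$, I would use the series form of (\ref{xpl}):
\[
\sy(x)=\textstyle{\sum}_{n\ge0}(n+2)^{-1}\lg x,\alpha_x^nx\rg .
\]
This converges, together with all derivatives, for $x$ small, and it defines a real-analytic function. The key step is to differentiate $P_n(x)=\lg x,\alpha_x^nx\rg$, which is a homogeneous polynomial of degree $n+2$. Total symmetry of $\lg\alpha(x,y),z\rg$ makes each $\alpha_x$ self-adjoint, and gives $\lg\alpha_y u,v\rg=\lg\alpha(u,v),y\rg$. Hence
\[
d_yP_n(x)=2\lg y,\alpha_x^nx\rg+\textstyle{\sum}_{k=0}^{n-1}\lg\alpha_x^kx,\alpha_y\alpha_x^{n-1-k}x\rg ,
\]
and each term of the sum equals $\lg\alpha(\alpha_x^kx,\alpha_x^{n-1-k}x),y\rg$. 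The gradient is therefore $2\alpha_x^nx+\sum_k\alpha(\alpha_x^kx,\alpha_x^{n-1-k}x)$, and these terms do not obviously collapse to $(n+2)\alpha_x^nx$. This is the main obstacle, since without commutativity they need not be multiples of $\alpha_x^nx$.

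My first attempt would sidestep the problem with Euler homogeneity. Apply $d_x$ along the radial direction: $\lg x,\nabla\sy(x)\rg=\sum_n\lg x,\alpha_x^nx\rg=\lg x,[\mathrm{Id}-\alpha_x]^{-1}x\rg$, which holds for any solution. That only checks the radial component, though. For the full statement I would instead argue as follows. The hypothesis says a $C^2$ solution exists, and solvability means the normal form is integrable, i.e. $d\big([\mathrm{Id}-\alpha_x]^{-1}x\big)$ is a symmetric operator. Write $\sy=\sum_m\sy_m$ as a Taylor expansion; analyticity follows because the right side of the normal form is analytic, so $\nabla\sy$ is analytic. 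Each homogeneous term satisfies $\nabla\sy_{m}=\alpha_x^{m-2}x$ in degree $m-1$. Euler's identity then gives
\[
m\,\sy_m(x)=\lg x,\nabla\sy_m(x)\rg=\lg x,\alpha_x^{m-2}x\rg ,
\]
so $\sy_m=P_{m-2}/m$. Summing yields the series in (\ref{xpl}), and the integral form follows from $\int_0^1t^{n+1}dt=(n+2)^{-1}$. This route uses existence, not the commutation identity, so the obstacle disappears. Analyticity is in any case clear from the analytic ODE system, or directly from the convergent series.

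For the remaining claims, first note that the degree-$2$ term is $\sy_2(x)=\tfrac12\lg x,x\rg$. So the Hessian at $0$ is $\lr$, and $0$ is a strict local minimum because $\nabla\sy(0)=0$ and $\lr$ is positive definite. Hence $\sy$ determines $\lr$. The degree-$3$ term is $\sy_3(x)=\tfrac13\lg x,\alpha(x,x)\rg$, a cubic form. By total symmetry and polarization, this cubic form determines the trilinear form $\lg\alpha(x,y),z\rg$. Since $\lr$ is already known, $\alpha$ itself is determined.
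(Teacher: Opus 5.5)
Your proof is correct and is essentially the paper's. The paper integrates the prescribed gradient $\nabla\sigma(tx)=[\mathrm{Id}-t\alpha_x]^{-1}tx$ along $t\mapsto tx$, which is your Euler-identity computation done all at once; so your ``radial component only'' identity already sufficed, because existence is assumed and the radial line integral also yields analyticity directly. The paper then reads off $\langle\,,\rangle$ and $\alpha$ from the second and third derivatives of $\sigma$ at $0$, exactly as you do with $\sigma_2$ and $\sigma_3$.
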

\begin{pf}We have (\ref{ivt}), which turns
(\ref{rwr}) into a formula for the gradient of $\,\sy\nh$. Thus, if
a solution $\,\sy\hs$ exists, it may be assumed to vanish at $\,0$, and then
$\,\sy(x)\,$ equals, for $\,x\,$ close to $\,0$, the line integral of the
gradient along the curve $\,[\hs0,1]\ni t\mapsto tx$, which proves the
first claim.

The final clause is in turn obvious since, for all $\,x,y,z\in\ew\nh$,
\[
[\hh d_x\w\sy](0)=0,\quad
[\hh d_x\w d_y\w\sy](0)=\lg x,y\rg,\quad
[\hh d_x\w d_y\w d_z\w\sy](0)=2\lg\alpha(x,y),z\rg,
\]
as one sees differentiating (\ref{rwr}) twice. 
\qed
\end{pf}

There are at least three
trivial ways of constructing new examples of $\,\alpha\,$ with
(\ref{tbg}) out of old ones, while keeping equation (\ref{rwr}) solvable.

The first one may be termed {\it rescaling}. With
$\,\ew\nh,d\,$ and $\,\lr\,$ in (\ref{tbg}) unchanged, we use any fixed 
$\,c\in\bbR\smallsetminus\{0\}\,$ to replace $\,\alpha$, and a solution
$\,\sy\,$ to (\ref{rwr}), by (\ref{rsc}). Secondly,
we say that two examples of $\,\alpha\,$ with (\ref{tbg}), in the same
dimension $\,d$, are $\,\mathrm{O}\hh(\hn d\hh)${\it-equiv\-a\-lent\/}
if one has
the same components as the other in suitably chosen or\-tho\-nor\-mal bases
of the two underlying Euclidean spaces.

Finally, we have the {\it orthogonal di\-rect-sum\/} construction,
where $\,\ew\nh$ and $\,\alpha\,$ in (\ref{tbg}) are the orthogonal direct sums
of $\,\ew'\nnh,\alpha'$ and $\,\ew''\nnh,\alpha''$ satisfying the analogs of 
(\ref{tbg}) in dimensions $\,d\hh'$ and $\,d\hh''\nh$, so that
\[
\alpha(x'\nh,x'')=0,\quad\alpha(x',y\hh')=\alpha'\nh(x'\nh,y\hh'),\quad
\alpha(x''\nh,y\hh'')=\alpha''\nh(x''\nh,y\hh'')
\]
whenever $\,x'\nh,y\hh'\in\ew'\nh$ and $\,x''\nh,y\hh''\in\ew''\nnh$. 
The solvability of (\ref{rwr}) for $\,\sy\,$
amounts to the same for both summands, and then it follows that 
$\,\sy(x'\nh\nnh+x'')=\sy'\nh(x')\hn+\sy''\nh(x'')$, where
$\,\sy'$ and $\,\sy''$ are the solutions for
$\,\ew'\nnh,\alpha'$ and $\,\ew''\nnh,\alpha''\nh$. To see this, one just 
fixes $\,x'$ or $\,x''$ in $\,\sy(x'\nnh+x'')$.

Following \cite[p.\,37]{jordan-von-neumann-wigner}, we call $\,\alpha\,$
{\it irreducible\/} when it is not such a direct sum, and {\it reducible\/}
otherwise. Thus, when $\,\alpha\,$ is reducible, our problem 
is reduced to understanding what happens in lower dimensions.

In addition, (\ref{rwr}) remains unaffected, since so does the gradient of
$\,\sy$, when $\,\sy$ and the inner product $\,\lr\,$ are replaced
by $\,r\hn\sy\,$ and $\,r\lr$, with any real $\,r>0$.

\section{Proof of Theorem~\ref{equiv}}\label{pd}
\setcounter{equation}{0}
\begin{lem}\label{imply}For\/ $\,\alpha\,$ as in\/ {\rm(\ref{tbg})}, the
existence of a solution\/ $\,\sy$ to\/ {\rm(\ref{rwr})} is 
equivalent to the condition\/ {\rm(ii)} in Theorem\/~{\rm\ref{equiv}}.
\end{lem}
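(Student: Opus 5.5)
The plan is to reduce solvability of (\ref{rwr}) to the closedness of an explicit vector field, and then read off condition (ii) from a power-series identity. By (\ref{ivt}), on a small ball $B$ around $0$ equation (\ref{rwr}) is equivalent to $\nabla\sy=w$, where
\[
w(x)\,=\,(\mathrm{Id}-\alpha_x\w)^{-\nnh1}x\,=\,\textstyle{\sum}_{n=0}^\infty\,\alpha_x^n x,
\]
with the series converging for $x$ near $0$. So a solution $\sy$ exists on some neighborhood of $0$ if and only if the real-analytic field $w$ is a gradient there. Since $B$ is simply connected, the Poincar\'e lemma says this happens exactly when the differential $dw_x:\ew\to\ew$ is self-adjoint for every $x\in B$. (If $\sy$ is a $C^2$ solution, self-adjointness of $dw=\mathrm{Hess}\,\sy$ is automatic.)

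Next I compute $dw_x$. Differentiating the identity $w=x+\alpha_x\w w$ in a direction $y$ gives
\[
dw_x(y)\,=\,y+\alpha(y,w)+\alpha_x\w\,dw_x(y).
\]
Using the symmetry $\alpha(y,w)=\alpha_w\w y$, this becomes
\[
dw_x\,=\,(\mathrm{Id}-\alpha_x\w)^{-\nnh1}(\mathrm{Id}+\alpha_{w(x)}\w).
\]
Total symmetry of $\lg\alpha(x,y),z\rg$ in (\ref{tbg}) makes every $\alpha_z$ self-adjoint, so both factors above are self-adjoint. A product of two self-adjoint operators is self-adjoint if and only if they commute. Hence solvability is equivalent to $[\alpha_x\w,\alpha_{w(x)}\w]=0$ for all $x$ near $0$.

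It remains to match this condition with (ii).

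\emph{From (ii) to solvability.} By linearity of $z\mapsto\alpha_z$ and convergence of the series, $\alpha_{w(x)}=\sum_n\alpha(\alpha_x^nx,\cdot\,)$. Each term commutes with $\alpha_x$ by (ii) (the $n=0$ term trivially so), hence so does the sum.

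\emph{From solvability to (ii).} Fix any $x\in\ew$ and substitute $tx$ for small real $t$. Since $\alpha_{tx}=t\alpha_x$ and $w(tx)=\sum_n t^{n+1}\alpha_x^nx$, we obtain
\[
0\,=\,[\alpha_{tx}\w,\alpha_{w(tx)}\w]\,=\,\textstyle{\sum}_{n\ge0}\,t^{n+2}\,[\alpha_x\w,\alpha(\alpha_x^nx,\,\cdot\,)]
\]
for all small $t$. This is a convergent power series in $t$ vanishing identically, so every coefficient vanishes. That gives (ii) for every $x$, not just for $x$ near $0$, because of this homogeneity.

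I expect the only real obstacle to be computing $dw$ cleanly, i.e.\ finding the factorization and recognizing both factors as self-adjoint. Once that is done, the rest is the coefficient extraction just described.
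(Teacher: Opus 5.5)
Your proof is correct and follows the paper's argument: you reduce solvability to self-adjointness of the derivative of $w(x)=(\mathrm{Id}-\alpha_x)^{-1}x$, factor that derivative as $(\mathrm{Id}-\alpha_x)^{-1}(\mathrm{Id}+\alpha_{w(x)})$ to obtain the commutation condition (\ref{ifo}), and then extract coefficients from the Neumann series. The only cosmetic difference is that you introduce the parameter $t$ by replacing $x$ with $tx$, whereas the paper replaces $\alpha$ with $t\alpha$ via (\ref{rsc}). By homogeneity these give the same coefficient identities, and your version also makes explicit why (ii) holds for every $x$, not only for $x$ near $0$.
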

\begin{proof}A solution $\,\sy\,$ to (\ref{rwr}), defined near $\,0\,$ in $\,\ew\nh$,
exists if and only if
\begin{equation}\label{ifo}
\begin{array}{l}
\mathrm{for\nnh\ }\,x\,\mathrm{\nnh\ near\nnh\ }\,0\,\mathrm{\nnh\ in\
}\,\ew\nnh\,\mathrm{\ the\nnh\ operator\nnh\ }\,\alpha(x,\,\cdot\,)\\
\mathrm{commutes\hs\ with\  
}\,\hs\alpha([\hh\mathrm{Id}-\alpha(x,\,\cdot\,)]^{-\nnh1}\nh x,\,\cdot\,).
\end{array}
\end{equation}
In fact,
let us set $\,w(x)=[\hh\mathrm{Id}-\alpha(x,\,\cdot\,)]^{-\nnh1}\nh x$,
so that the requirement in question is for the vector field $\,w\,$ to be,
locally, a gradient or, in other words, for the directional derivative 
operator sending $\,y\in\ew\hh$ to $\,d_y\w w\in\ew\hh$ to be 
self-ad\-joint at each point.
Let $\,A(x)=\mathrm{Id}-\alpha(x,\,\cdot\,)\,$ and
$\,B(x)=[A(x)]^{-\nnh1}\nh$, so that $\,w(x)=[B(x)]\hh x\,$ and, if we
suppress the dependence on $\,x\,$ in the notation, 
$\,d_y\w B=B\hh[\hh\alpha(y,\,\cdot\,)]B$. Thus,
$\,[\hh d_y\w w](x)\,=\,[B(x)][\hh\alpha(y,\,\cdot\,)][B(x)]x\,
+\,[B(x)]y\,$ which, as $\,w(x)=[B(x)]\hh x$, reads
\[
[\hh d_y\w w](x)\,=\,[B(x)][\hh\alpha(y,w(x))\,+\,y].
\]
On the right-hand side we have, evaluated on $\,y$,
a composition of two self-ad\-joint 
operators, $\,[B(x)]\,$ and $\,\alpha(w(x),\,\cdot\,)\,+\,\mathrm{Id}$, 
which is itself self-ad\-joint if and only if they commute. Ignoring the
occurrences of $\,\mathrm{Id}$, we see that the condition is
the commutation of 
$\,\alpha(w(x),\,\cdot\,)$ with $\,B(x)$, that is, with its inverse
$\,A(x)=\mathrm{Id}-\alpha(x,\,\cdot\,)$, and consequently with
$\,\alpha(x,\,\cdot\,)$. This is precisely (\ref{ifo}), as required.

With $\,\alpha_x\w=\alpha(x,\,\cdot\,)$, we obviously have, for $\,x\,$ near
$\,0\,$ in $\,\ew\,$ and $\,t\in[\hs0,1]$,
\begin{equation}\label{srs}
[\hh\mathrm{Id}-t\alpha(x,\,\cdot\,)]^{-\nnh1}\hs
=\,\mathop{\displaystyle{\sum}}\limits_{n\hs=\,0}^\infty t^n\nh\alpha_x^n.
\end{equation}
Due to (\ref{ifo}) which, in view of (\ref{rsc}),
remains unaffected when one multiplies 
$\,\alpha\,$ by any $\,t\ne0$, it now follows
that (i) in Theorem~\ref{equiv} is
equivalent to (ii).
\end{proof}
\begin{lem}\label{diago}Suppose that\/ $\,\alpha\,$ satisfies\/ 
{\rm(\ref{tbg})} and\/ {\rm(\ref{rwr})} has a solution\/ $\,\sy\nh$.
\begin{enumerate}
\item[{\rm(i)}] There exists a dense open set\/ $\,Z\subseteq\ew\,$ formed
by\/ $\hs\,x\hh$ such that $\,\alpha_x\w=\alpha(x,\,\cdot\,)$ has the maximum
possible number of distinct eigen\-values and, at the same time, 
$\,K[x]\,$ in\/ {\rm(\ref{hve})} is of dimension\/
$\,\mathrm{max}\hs\{\dim K[y]:y\in\ew\}$. 
\item[{\rm(ii)}] Given\/ $\,x\in Z$, any or\-tho\-nor\-mal basis\/ 
$\,e\hn_1\w,\dots,e\nh_d\w$ of\/ $\,\ew\nh$, di\-ag\-o\-nal\-iz\-ing
$\,\alpha_x\w$, must also di\-ag\-o\-nal\-ize all\/ $\,\alpha_y\w$ with\/
$\,y\in K[x]$.
\end{enumerate}
\end{lem}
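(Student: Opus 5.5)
For part (i) I will use algebraic genericity. Both quantities in question are lower semicontinuous in $\,x\,$ and are attained on Zariski-open sets.

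First take the eigenvalue count. Each $\,\alpha_x\w\,$ is self-ad\-joint, by the total symmetry in (\ref{tbg}), and depends linearly on $\,x$. The number of distinct eigen\-values of a self-ad\-joint operator equals the degree of its minimal polynomial. Equivalently, it is the dimension of $\,\mathrm{span}\,\{\alpha_x^n:n\ge0\}$, which is the rank of a matrix whose entries are polynomial in $\,x$. The condition that this rank is at least $\,k\,$ is the nonvanishing of some $\,k\times k\,$ minor, a polynomial in $\,x$. So the set where the rank attains its maximum $\,k_0$ is the complement of the common zero set of finitely many polynomials, at least one of which is not identically zero. That set is open, and it is dense, since a nonzero polynomial cannot vanish on an open set.

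The same argument applies to $\,\dim K[x]$. This is the rank of the family of vectors $\,x,\alpha_xx,\dots,\alpha_x^{d-1}x$, whose components are polynomial in $\,x$. Higher powers add nothing, by Cayley–Hamilton.

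I then let $\,Z\,$ be the intersection of these two dense open sets. It is again dense and open, which gives (i).

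For part (ii) the input is Theorem~\ref{equiv}, or more precisely Lemma~\ref{imply} together with the equivalence (ii)$\Leftrightarrow$(iii). Since (\ref{rwr}) is solvable, for every $\,x\,$ all the $\,\alpha_y\w$ with $\,y\in K[x]\,$ commute with $\,\alpha_x\w$, because $\,x\in K[x]$. Strictly, Lemma~\ref{imply} only gives the commutation of $\,\alpha_x\w\,$ with each $\,\alpha(\alpha_x^nx,\,\cdot\,)$. By linearity this already yields commutation of $\,\alpha_x\w\,$ with every $\,\alpha_y\w$, $\,y\in K[x]$, so the full force of (iii) is not needed.

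Now fix $\,x\in Z$ and an orthonormal basis $\,e\hn_1\w,\dots,e\nh_d\w\,$ diagonalizing $\,\alpha_x\w$. Each $\,\alpha_y\w\,$ with $\,y\in K[x]\,$ commutes with $\,\alpha_x\w$, so it preserves every eigen\-space of $\,\alpha_x\w$. If every eigen\-space were one-dimensional, each $\,e\hn_i\w\,$ would be an eigen\-vector of $\,\alpha_y\w$ and we would be done. In general, though, the eigen\-spaces of $\,\alpha_x\w\,$ can have dimension greater than one even for generic $\,x$; this happens in the Jordan-algebra examples, for instance the complex Hermitian case. So the real content lies in the maximality built into $\,Z$.

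The plan for that step is a perturbation argument. Take $\,y\in K[x]\,$ and consider $\,x_s=x+sy\,$ for small real $\,s$. Since $\,Z\,$ is open, $\,x_s\in Z$. Because $\,\alpha_x\w\,$ and $\,\alpha_y\w\,$ commute, they are simultaneously diagonalizable. The eigen\-values of $\,\alpha_{x_s}\w=\alpha_x\w+s\hh\alpha_y\w\,$ are $\,\lambda+s\mu$, where $\,\lambda\,$ runs over eigen\-values of $\,\alpha_x\w\,$ and $\,\mu\,$ over eigen\-values of $\,\alpha_y\w\,$ on the $\,\lambda$-eigen\-space. If $\,\alpha_y\w\,$ had two distinct eigen\-values on some eigen\-space of $\,\alpha_x\w$, then for small $\,s\ne0\,$ the operator $\,\alpha_{x_s}\w\,$ would have strictly more distinct eigen\-values than $\,\alpha_x\w$. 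That contradicts the maximality of the eigen\-value count at $\,x\in Z$.

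Hence $\,\alpha_y\w\,$ is a scalar multiple of the identity on each eigen\-space of $\,\alpha_x\w$. It is therefore diagonal in any orthonormal basis diagonalizing $\,\alpha_x\w$, which proves (ii).

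I expect the main obstacle to be realizing that commutation alone does not suffice, so that the maximal-eigenvalue-count condition defining $\,Z\,$ must be used. The perturbation along $\,x+sy\,$ handles this. The dimension condition on $\,K[x]\,$ in the definition of $\,Z\,$ is not used in (ii) as I have set it up, presumably because it is needed later in the paper.
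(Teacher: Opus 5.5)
Your proof is correct and follows the paper's argument. Commutation of $\alpha_x$ with every $\alpha_y$, $y\in K[x]$, comes from Lemma~\ref{imply} plus linearity, and maximality of the eigenvalue count at $x\in Z$ then forces each $\alpha_y$ to act as a scalar on every eigenspace of $\alpha_x$. Your perturbation along $x+sy$ just folds the paper's two steps (``$y$ close to $x$ in $K[x]$, then extend by real-analyticity'') into one, and your minor-based argument fills in what the paper calls immediate in (i).

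One caution: do not present (ii)$\Leftrightarrow$(iii) of Theorem~\ref{equiv} as an input, since the paper proves that theorem using this very lemma. As you yourself observe, Lemma~\ref{imply} alone suffices.
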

\begin{proof}The assertion (i) is immediate, since $\,Z\,$ is defined to be
the intersection of two specific dense open sets.

Any fixed or\-tho\-nor\-mal basis 
di\-ag\-o\-nal\-iz\-ing $\,\alpha_x\w$ di\-ag\-o\-nal\-izes
$\,\alpha_y\w$ for $\,y\in K[x]$ sufficiently close to $\,x$. In fact,
as $\,x\in K[x]$, such $\,\alpha_y\w$ commute with $\,\alpha_x\w$  due to 
Lemma~\ref{imply}, so that they leave the 
eigen\-spaces of $\,\alpha_x\w$ invariant, and
have to act on each eigen\-space as a multiple 
of the identity, or else $\,\alpha_y\w$ would have more distinct
eigen\-values than $\,\alpha_x\w$ does. 
An obvious real-an\-a\-lyt\-ic\-i\-ty argument yields the same conclusion
for all $\,y\in K[x]$, whether or not close to $\,x$, proving (ii).
\end{proof}
\begin{proof}[{\it Proof of Theorem\/~{\rm\ref{equiv}}}]
Assume (i). 
Choosing $\,x\in Z\,$ and a basis $\,e\hn_1\w,\dots,e\nh_d\w$ as in 
part (ii) of Lemma~\ref{diago}, we see that
$\,\alpha(\alpha_x^nx,\,\cdot\,)$, for all integers $\,n\ge0$,
being simultaneously di\-ag\-o\-nal\-iz\-able, 
commute with one another. Since $\,Z\subseteq\ew\,$ is open and dense, the 
same holds, in view of 
real-an\-a\-lyt\-ic\-i\-ty, for all $\,x\in\ew\nh$, so that (iii) follows.
As (iii) trivially implies (ii), Lemma~\ref{imply} completes the proof.
\end{proof}

\section{Proof of uniqueness in Theorem~\ref{first}}\label{pf}
\setcounter{equation}{0}
By a {\it Jor\-dan algebra\/} \cite[p.\,24]{faraut-koranyi}
one means a real or complex vector space
$\,\ew\hh$ with a symmetric bi\-lin\-ear multiplication
$\,\alpha:\ew\nnh\times\ew\to\ew\hh$ such that, for every
$\,x\in\ew\hskip-2pt$, the
$\,\alpha$-mul\-ti\-pli\-ca\-tions by $\,x\,$ and by the
$\,\alpha$\hh-\hh square of $\,x\,$ commute:
\[
\left[\hs\alpha(x,\,\cdot\,),\,\alpha(\alpha(x,x),\,\cdot\,)\right]\,=\,0.
\]
Corollary~\ref{jdalg} is thus a trivial consequence of (ii) in
Theorem~\ref{equiv} for $\,n=1\,$ and the well-known fact
\cite[p.\,27]{faraut-koranyi} that (ii) in
Theorem~\ref{equiv}, as a property of $\,\alpha$,
holds whenever $\,\alpha\,$ is a Jor\-dan-al\-ge\-bra multiplication.

One refers to a real Jor\-dan algebra $\,\ew\,$ as {\it Euclidean\/}
\cite[p.\,42]{faraut-koranyi} if it satisfies 
(\ref{tbg}) for some inner product $\,\lr\,$ in $\,\ew$. This is known
\cite[pp.\,42,\,154]{faraut-koranyi} to be the case if and only if the algebra 
is {\it formally real\/} in the sense that $\,\alpha(x,x)+\alpha(y,y)\ne0,$
unless $\,x=y=0$. Thus, being obviously Euclidean, 
\begin{equation}\label{fre}
\mathrm{the\ Jor\-dan\ algebras\ arising\ in\ Corollary~\ref{jdalg}\ 
are\ all\ formally\ real.}
\end{equation}
A Jor\-dan algebra is said to be {\it irreducible\/}
\cite[p.\,37]{jordan-von-neumann-wigner} if it is not 
a direct sum of low\-er-di\-men\-sion\-al ones. 
Irreducible formally real Jor\-dan algebras were classified by 
Jor\-dan, von Neu\-mann and Wigner
\cite[p.\,63, Fundamental Theorem 2]{jordan-von-neumann-wigner}, and their 
list of types consists of the first class of examples of $\,\alpha\hs$
in Theorem~\ref{first} (which they list under $\,\gamma=1\,$ and
$\,\gamma=2$) and the three cases of Remark~\ref{spcas} (referred to as
$\,\gamma=3$ and $\,\gamma\ge4$). See also \cite[Chapter V]{faraut-koranyi}.

Due to (\ref{fre}), this proves the uniqueness clause of Theorem~\ref{first}.

As pointed out by Jor\-dan, von Neu\-mann and Wigner 
\cite[p.\,63]{jordan-von-neumann-wigner}, in our first class of examples of
$\,\alpha$, the cases $\,d=3\,$ and $\,d=4\,$ coincide with the case 
(i) of Remark~\ref{spcas}
and, respectively, (ii) for the scalar field $\,\bbC\,$ in
Remark~\ref{spcas} where, in both (i) and (ii), $\ys\hs$ is of real/com\-plex
dimension two.

The examples in our Theorem~\ref{first} were designed with the purpose of
maximally simplifying the presentation, and result in abundant choices 
of Jor\-dan algebras which are reducible. For instance, $\,\ew\nh$ in
the second class of examples could be the algebra generated by
$\,\{\mathrm{Id},x\}$, with a single self-ad\-joint endo\-mor\-phism
$\,x\,$ of $\,\ys\nnh$.

\section{Proof of Theorem~\ref{frcls}}\label{pl}
\setcounter{equation}{0}
For any $\,c\in\bbR\smallsetminus\{0\}\,$ and $\,\lambda,\ve\in(0,\infty)$,
\begin{equation}\label{red}
\displaystyle{\int}_{\hskip-5pt0}^\infty \Pi(r)\,dr\,<\,\infty\mathrm{,\
where\ 
}\,\Pi(r)=\nnh\displaystyle{\int}_{\hskip-5pt{r/|c|}}^\infty
e^{-\lambda t}\hh t\hs^\ve\hn dt.
\end{equation}
In fact, $\,\Pi(r)=\lambda\nh^{-\ve-1}\hn\Gamma(\ve+1,\lambda r/|c|)$, where
$\,\Gamma(\delta,q)=\int_q^\infty e^{-t}\hh t\hs^{\delta-1}\hn dt\,$  
is the upper incomplete gamma function. Now (\ref{red}) follows since
$\,\Gamma(\delta,0)=\Gamma(\delta)\,$ is finite while, 
according to \cite[p.\,70]{olver},
$\,\Gamma(\delta,q)\le e^{-q}\hh q\hh^\delta\nnh/(q-\delta+1)\,$ whenever
$\,q>\delta-1>0$.

If $\,c\in\bbR\smallsetminus\{0\}\,$ and
$\,x=(x\hn_1\w,\dots,x\nh_d\w)\,$ lies 
in the open half-cone given by
\begin{equation}\label{hfc}
1-c\hh\xi>|c|s\mathrm{,\ for\ }\,\xi=x\nh_d\w\mathrm{\ and\ 
 }\,s\ge0\,\mathrm{\ such\ that\ 
}\,s^2\nnh=\textstyle{\sum}\hs_{i\hh<\hn d}\w\,x_i^2,
\end{equation}
then, for $\,\eta=y\nh_d\w$, any $\,q\in(0,\infty)$, and $\,r\ge0\,$ with 
$\,r^2\nnh=\textstyle{\sum}\hs_{i\hh<\hn d}\w\,y_i^2$,
as in (\ref{yey}),
\begin{equation}\label{fin}
\displaystyle{\int}_{\hskip-5.5pt\bbR\hn^d}\w\,
e^{\lg x,y\rg -\eta/c}\hh(1+c\hh\eta)^q\hn u(1+c\hh\eta-|c|r)\,dy\,<\,\infty.
\end{equation}
To prove (\ref{fin}), we begin by setting $\,t=(1+c\hh\eta)/c^2\nh$. We thus
have
\begin{equation}\label{ipm}
\lg x,y\rg\,-\,\eta/c\,\le\,-\lambda t\,+\,(1-c\hh\xi)/c^2\nh\mathrm{\ \ with\ 
}\,\lambda=1-c\hh\xi-|c|s.
\end{equation}
Namely, the Schwarz inequality gives $\,\lg x,y\rg\le sr+\xi\eta$, and
so $\,c^2\nh(\lg x,y\rg-\eta/c)\le|c|s|c|r+(c\hh\xi-1)c\hh\eta$. As
$\,|c|r\le1+c\hh\eta=c^2\nh t\,$ in view of the Heaviside factor in 
(\ref{fin}), replacing $\,c\hh\eta\,$ in the term $\,(c\hh\xi-1)c\hh\eta\,$
with $\,c^2\nh t-1$, we get (\ref{ipm}).

Let, again, $\,t=(1+c\hh\eta)/c^2\nh$, and 
$\,(y\hn_1\w,\dots,y\hn_{d-1}\w)=rz$, where 
$\,r\in[\hs0,\infty)\,$ and $\,z$ ranges over the unit sphere
$\,S\hh^{d-2}$ around $\,0\,$ in $\,\bbR\hn^{d-1}\nh$. Now
(\ref{fin}) reads
\[
|c|^{2q}\nh\nnh\displaystyle{\int}_{\hskip-5pt0}^\infty
\hskip-3.5pt\displaystyle{\int}_{\hskip-5ptr/|c|}^\infty
\hh\displaystyle{\int}_{\hskip-5.5ptS^{d-2}}\w
e^{\lg x,y\rg-\eta/c}\hh t\hs^q\hh r^{d-2}\nh dz\,dt\,d\hh r\,<\,\infty.
\]
Since $\,r^{d-2}\le|c|^{d-2}\hh t^{d-2}$ wherever 
$\,t\ge r/|c|$, 
with $\,\sd$ denoting the 
$\,(d-2)$-di\-men\-sion\-al volume of $\,S\hh^{d-2}\nh$,
we see that, by (\ref{ipm}), the last integral does not exceed
\[
|c|^{2q+d-2}\hs e^{1/c^2}\nnh e^{-\xi/c}\,\sd
\nnh\displaystyle{\int}_{\hskip-5pt0}^\infty
\hskip-3.5pt\displaystyle{\int}_{\hskip-5ptr/|c|}^\infty
e^{-\lambda t}\hh t\hs^{q+d-2}\hn dt\,d\hh r,
\]
and (\ref{fin}) follows from (\ref{red}) for $\,\ve=q+d-2$, as (\ref{hfc}) and
(\ref{ipm}) give $\,\lambda>0$.

For $\,f\,$ defined by (\ref{slt}), and $\,i=1,\dots,d$,
we have 
\begin{equation}\label{ltr}
\mathrm{a)}\hskip6pt\partial\nh_i\w\widehat f\nh=\widehat{\,y\hn_i\w f\,}\nh,
\qquad\mathrm{b)}\hskip6ptx\hn_i\w\widehat f\hs
=\,-\widehat{\partial\nh_i\w f}\nh.
\end{equation}
Here $\,\widehat{\hskip5pt}\,$ stands for the La\-place transform,
the transforms of functions of $\,(y\hn_1\w,\dots,y\hn_d\w)\,$ 
depend on $\,(x\hn_1\w,\dots,x\nh_d\w)$, while $\,\partial\nh_i\w$ equals
$\,\partial/\partial\hs x\hn_i\w$ in (\ref{ltr}-a) and 
$\,\partial/\partial\hs y\hn_i\w$ in (\ref{ltr}-b). Also, the domains
of all the transforms appearing in (\ref{ltr}) contain 
the open half-cone (\ref{hfc}).

The last claim, about the domains of the transforms of 
$\,f\nh,\,y\hn_i\w f\,$ and, respectively,
$\,\partial\nh_i\w f\nh$, follows from (\ref{fin})
and the obvious inequality $\,p\ge2\,$ in (\ref{slt}) since,
\begin{equation}\label{psi}
\mathrm{setting\ }\,\,\psi\,=\,(1+c\hh\eta)^2\nnh-\hs c^2\nh r^2\nh\mathrm{,\ 
one\ has\ }\,\,0\le\psi^{\delta/2}\nh\le(1+c\hh\eta)^\delta 
\end{equation}
whenever $\,\delta\in[\hs0,\infty)$,
due to the Heaviside factor in (\ref{fin}). More
precisely, for $\,f$ we use (\ref{psi}) and
(\ref{fin}) with $\,q=p\ge2$. If $\,i<d$,
the definition of $\,r\,$ preceding (\ref{fin}) and the Heaviside factor in 
(\ref{fin}) give $\,|c||y\hn_i\w|\le|c|r\le1+c\hh\eta$. Thus, as   
$\,|\hh\partial\nh_i\w\psi\hh^{p/2}|
=p\hh c^2\hn|y\hn_i\w|\psi^{(p-2)/2}$ and
$\,|y\nh_i\w\psi\hh^{p/2}|=|y\nh_i\w|\,|\psi\hh^{p/2}|$, we get, again from
(\ref{psi}), 
$\,|\hh\partial\nh_i\w\psi\hh^{p/2}|\le p|c|(1+c\hh\eta)^{p-1}$ and
$\,|c||y\nh_i\w\psi\hh^{p/2}|\le(1+c\hh\eta)^{p+1}\nnh$. Now 
(\ref{fin}) with $\,q\,$ equal to $\,p-1\ge1\,$ or, respectively, to $\,p+1$,
yields the required conclusion about the domains. When $\,i=d$, 
similarly,  
$\,|c|\hs|\hh\partial\nnh_d\w(e^{-\eta/c}\hn\psi\hh^{p/2})|
\le e^{-\eta/c}\hn[\psi\hh^{p/2}\nnh+p\hh c^2\hn\psi^{(p-2)/2}\nh(1+c\hh\eta)]
\le e^{-\eta/c}\hn[(1+c\hh\eta)^p\nnh+p\hh c^2\hn(1+c\hh\eta)^{p+1}]\,$ and, 
with $\,|c||y\nh_d\w|=|c\eta|\le(1+c\hh\eta)+1$, one sees that 
$\,|c|\hs|y\nh_d\w\psi\hh^{p/2}|
\le|c|\hh[(1+c\hh\eta)^{p+1}\nnh+(1+c\hh\eta)^p]$.

We obtain (\ref{ltr}-a) from (\ref{fin}) combined with the Le\-besgue
dominated convergence theorem. The integration by parts leading to 
(\ref{ltr}-b) is also permitted: the boundary terms in question
all vanish. For $\,i<d\,$ this is clear as $\,\psi\,$ given by (\ref{psi}) 
equals $\,0\,$ on the boundary half-cone $\,1+c\hh\eta=|c|r$, and hence so
does $\,f\,$ due to positivity of the exponent $\,p\ge2\,$ in
(\ref{slt}). Finally, when $\,i=d$, every line parallel to the
$\,d\hh$th coordinate direction intersects the solid half-cone
$\,1+c\hh\eta\ge|c|r\,$ along a half-line $\,L$, and $\,f=0\,$ at both
``endpoints'' of $\,L\,$ (one of which is infinite). 

On the other hand, (\ref{slt}) easily implies that, with
$\,\partial\nh_i\w=\partial/\partial\hs y\hn_i\w$,
\begin{equation}\label{amt}
\begin{array}{ll}
(1+c\hh y\nh_d)\hs\partial\nh_i\w f\hs+c\hh y\nh_i\w\partial\nnh_d\w f\hs
=\,-y\nh_i\w f\,\mathrm{\ \,for\,\ all\ }\,\,i<d,\\
(1+c\hh y\nh_d)\hs\partial\nnh_d\w f\hs
+\,c\hh\textstyle{\sum}\hs_{i\hh<\hn d}\w\,
y\nh_i\w\partial\nh_i\w f\hs
=\,-(cd\,+\,y\nh_d\w)f\nh
\end{array}
\end{equation}
which, as a consequence of (\ref{ltr}), amounts to
\begin{equation}\label{rea}
\begin{array}{ll}
(1-c\hh x\nh_d)\hs\partial\nh_i\w\widehat f=
(\widehat f\nh+c\hs\partial\nnh_d\w\widehat f)x\nh_i\w\,\mathrm{\ if\ }\,i<d,\\
(1-c\hh x\nh_d)\hs\partial\nnh_d\w\widehat f\,
=\,x\nh_d\w\widehat f\,
+\,c\hs\textstyle{\sum}\hs_{i\hh<\hn d}\w\,
x\nh_i\w\partial\nh_i\w\widehat f\nh.
\end{array}
\end{equation}
The system (\ref{rea}) is nothing else than (\ref{omx}) 
with $\,\alpha\,$ replaced by
$\,\widetilde\alpha=c\hs\alpha$, and treated as a condition imposed on
$\,\widehat f\nh=e^\sy\nh$, rather than $\,\sy\nh$. 
The uniqueness -- mentioned in (\ref{ivp}) -- 
of a solution\ $\,\varphi\,$ to (\ref{nph}) with $\,\varphi(0)=1$,
applied to $\,\widetilde\alpha=c\hs\alpha\,$ rather than
$\,\alpha$, now proves Theorem~\ref{frcls}, for a suitably chosen
constant $\,\kappa$.

\section{Background and summary}\label{bg}
\setcounter{equation}{0}
In dimension one, the classic Meix\-ner class consists of the random 
variables having Sze\-g\H{o}\hs-Ja\-co\-bi parameters $\,\alpha_n\w$ and 
$\,\omega_n\w$ that are, respectively, linear in $\,n\,$ and quadratic
in $\,n\,$ with zero constant term. Up to rescaling and translations, this 
class can be split into six types: 
Gauss\-i\-an, Pois\-son, Gam\-ma, negative binomial, two\hh-pa\-ra\-me\-ter
hyperbolic, and binomial random variables. Attempts to extend this definition 
to higher dimensions used to run into a difficulty, due to the absence of a
reasonable replacement of the Sze\-g\H{o}\hs-Ja\-co\-bi parameters. 
However, in the past two decades, such a replacement has been proposed, in
the form of joint quantum operators: creation, preservation, and annihilation, 
generated by the random variables $\,\yo,\dots,\yd$, which are the
components of a random vector. Splitting each preservation operator into two
halves, one attached to the corresponding creation operator, the other to
the annihilation operator, one obtains the sem\-i-quan\-tum operators:
sem\-i-cre\-a\-tion and sem\-i-an\-ni\-hi\-la\-tion operators. As pointed out 
in \cite{popa-stan-15}, the Meix\-ner random variables can be characterized
by the fact that a nested double commutator involving the
sem\-i-an\-ni\-hi\-la\-tion operator and the multiplication operator generated
by the random variable $\,\yy\,$ is a linear combination of the
sem\-i-quan\-tum operators. Moreover, within this class, the Meix\-ner
sub\-class consisting, up to a rescaling and translation, of the Gauss\-i\-an
and Gam\-ma random variables, have the property that the 
(simple) commutator of the sem\-i-an\-ni\-hi\-la\-tion operator 
$\,\,U$ and the multiplication operator $\,\yy\,$ equals a 
linear combination of $\,\yy\,$ and the identity operator. These
observations allow us to both recover the classic Meix\-ner class and 
define a hierarchy of Meix\-ner random variables, in an operatorial manner,
involving the number of nested commutators, the joint
sem\-i-an\-ni\-hi\-la\-tion operator $\,\,\ui$, and the multiplication 
operators $\,\yi$, $\,i=1,\dots,d$, all of which are used in defining them.
Those using only one commutator in their definition are called
$\,1$-Meix\-ner random vectors, those with two nested commutators -- 
referred to as $\,2$-Meix\-ner, etc. It was shown in
\cite{popa-stan-15} that every nondegenerate two-di\-men\-sion\-al 
$\,1$-Meix\-ner random vector can be reduced, via an invertible affine
transformation, to a random vector whose components are independent
Gam\-ma or Gauss\-i\-an random variables. An analogous result of
\cite{stan-catrina} states that a nondegenerate three-di\-men\-sion\-al
$\,1$-Meix\-ner random vector can be similarly reduced 
either to a random vector with components as above, or to one having 
components which are three random variables with a joint probability
distribution supported in a solid half-cone or on its boundary surface. 

According to yet another result of \cite{stan-catrina}, stated below as
Proposition~\ref{omreq}, equation 
(\ref{nph}) is satisfied by the La\-place transforms $\,\varphi\,$ of all
suitably normalized nondegenerate $\,1$-Meix\-ner random vectors.

\section{Some details}\label{sd}
\setcounter{equation}{0}
Let us fix an integer $\,d\ge1$, a probability space
$\,(\varOmega,\mathcal{F}\nnh,P)$, and an $\,\bbR\hn^d\nh$-val\-ued 
random variable $\,\yy=(\yo\dots,\yd)$, assumed to have {\it finite moments
of all orders}, 
that is, to satisfy the condition
$\,\yi\in L\nnh^q(\varOmega,\mathcal{F}\nnh,P)\,$ for every   
$\,q\in(0,\infty)\,$ and all $\,i=1,\dots,d$. 
We are thus requiring $\,|\yi|^q\nh$, for each 
$\,q\hh$ and each component $\,\yi:\varOmega\to\bbR$, to be in\-te\-gra\-ble. 
Two vector subspaces of 
$\,L\nnh^2(\varOmega,\mathcal{F}\nnh,P)\,$ are of interest to us:
\begin{equation}\label{sbs}
\begin{array}{l}
\mathrm{the\ sub\-al\-ge\-bra\ }\hs F\nh\mathrm{\ generated\ by\
}\hs\yo,\hs\dots,\yd,\\
\mathrm{and\hs\ the\hs\ closure\hs\ }\,H\,\mathrm{\hs\ of\hs\ 
}\,F\mathrm{\hs\ in\hs\ }\,L\nnh^2(\varOmega,\mathcal{F}\nnh,P),
\end{array}
\end{equation}
the word `sub\-al\-ge\-bra' referring to the associative algebra of measurable
functions $\,\varOmega\to\bbR$, and `closure' to the 
$\,L\nnh^2$ topology. 
The space $\,F\,$ consists of all $\,f(\yo,\dots,\yd)$, with 
$\,f\,$ ranging over polynomial functions of $\,d\,$ real variables. 
For such $\,f\,$ having degrees at most $\,n$, the resulting
functions $\,f(\yo,\dots,\yd)\,$ form a vector sub\-space
$\,F\hskip-2.8pt_n\w$ of $\,F\nnh$, and we denote by $\,G\nh_n\w$ the 
$\,L\nnh^2\nnh$-or\-thog\-o\-nal complement of
$\,F\hskip-2.8pt_{n-1}\w$ in $\,F\hskip-2.8pt_n\w$, where we set 
$\,F\hskip-2.8pt_{-\nnh1}\w=G\nh_{-\nnh1}\w=\{0\}$.
Thus, $\,G\nh_0\w=F\hskip-2.8pt_0\w=\bbR$.

Clearly, $\,H\,$ is the Hil\-bert-space direct sum of the family
$\,\{\hn G\nh_n\nh\}_{n=0}^\infty$.

Some terminology: functions from $\,F$ are called {\it polynomial random
variables}, those among them lying in $\,G\nh_n\w$ are said to be 
{\it degree\/ $\,n\,$ homogeneous}, while each $\,G\nh_n\w$, and $\,H\nh$, is
referred to as the {\it $n$-th homogeneous chaos space\/} (and, respectively, 
the {\it chaos space}) {\it generated by\/} $\,\yo,\dots,\yd$.

We identify each $\,\yi$ with the operator $\,F \to F\hs$
of multiplication by $\,\yi$, that is,
\begin{equation}\label{mlt}
f(\yo,\dots,\yd)\,\mapsto\,\yi\hs f(\yo,\dots,\yd).
\end{equation}
As shown in \cite{accardi-nahni-02} and \cite{accardi-kuo-stan-04}, 
each operator (\ref{mlt}) sends each $\,G\nh_n\w$ into
$\,G\nh_{n-1}\w\hn\oplus G\nh_n\w\hn\oplus G\nh_{n+1}\w$.

For any integer $\,n\ge0$, we denote by $\,P\hskip-2.8pt_n\w:F\to G\nh_n\w$ 
the $\,L\nnh^2\nnh$-or\-thog\-o\-nal projection operator,
and by $I:F\to F\hs$ the identity operator. Thus,
$\,I\nh=\sum_{\hs n\ge0}\w P\hskip-2.8pt_n\w$, 
the sum being finite when applied to any vector in $\,F\nh$. The same
finiteness property is obviously shared by the operators
$\,a_i^0,\hs a_i^-:F\to F\nh$, where $\,i=1,\dots,d$, defined by
$\,a_i^0=\sum_{\hs n\ge0}\w P\hskip-2.8pt_n\w\yi P\hskip-2.8pt_n\w$, 
and $\,a_i^-=\sum_{\hs n\ge0}\w P\hskip-2.8pt_{n-1}\w\yi P\hskip-2.8pt_n\w$.
One calls
\begin{equation}\label{sao}
\ui\hs=\,a_i^-\hs+\,\displaystyle{\frac12}\,a_i^0
\end{equation}\
the $\,i\hh$th {\it sem\-i-an\-ni\-hi\-la\-tion\/} operator 
of (or, generated by) $\,\yo\dots,\yd$.

\section{1-Meix\-ner random vectors and equation {\rm(\ref{nph})}}\label{om}
\setcounter{equation}{0}
For $\,(\varOmega,\mathcal{F}\nnh,P)\,$ and $\,\yo\dots,\yd$ satisfying the
assumptions 
listed in the initial paragraph of Section~\ref{sd}, and $\,F\hs$ given by
(\ref{sbs}), let $\uo,\dots,\ud$ be the corresponding
joint sem\-i-an\-ni\-hi\-la\-tion operators, defined by (\ref{sao}). 
Following \cite[Definition 2]{stan-catrina}, we say that
$\yy=(\yo,\dots,\yd)\,$ is a $\,1${\it-Meix\-ner random vector\/} 
if, for all $\,i,j\in\{1,\dots,d\}$, the commutator $\,[\hs\ui,\yj]:F\to F$,
with $\,\yj$ as in (\ref{mlt}), is a linear combination of $\,\yo,\dots,\yd$ and $\,I\nh$, where $\,I\nh:F\to F\hs$
denotes the identity operator. In other words,
\begin{equation}\label{uix}
\left[\ui,\yj\right]
=\mathop{\displaystyle{\sum}}\limits_{k\hs=1}^d\alpha\hn_{i\hn jk}\w\yj+\beta_{ij}\w
I\mathrm{\ for\ all\ }\hs i,j\in\{1,\dots,d\}\hs\mathrm{\ and\ some\
}\,\alpha\hn_{i\hn jk}\w,\beta_{ij}\w\nh\in\bbR.
\end{equation}
A $\,1$-Meix\-ner random vector $\yy=(\yo,\dots,\yd)\,$ is called {\it nondegenerate\/}
if the multiplication operators $\yo,\dots,\yd\,$ and $\,I\hs$ are linearly independent.
In this case, as shown in \cite[Proposition 2]{stan-catrina}, 
the coefficients $\,\alpha\hn_{i\hn jk}\w$ in (\ref{uix}) are not only
(obviously) unique, buy also {\it totally symmetric in\/ } $\,i,j,k$.
Moreover, using an invertible af\-fine transformation, we are then
always free to assume the normalizing condition (\ref{nrm}).
The following conclusion was established in 
\cite[Lemma 6]{stan-catrina}.
\begin{prop}\label{omreq}
Let\/ $\yy=(\yo\dots,\yd)$ be a\/ $\,1$-Meix\-ner random vector 
with\/ {\rm(\ref{nrm})}.
Then equation\/ {\rm(\ref{nph})} is satisfied by the La\-place transform\/
$\,\varphi\,$ of\/ $\,\yy:\varOmega\to\bbR\hn^d\nh$, explicitly given by\/ 
$\,\varphi(x\hn_1\w,\dots,x\nh_d\w)
=\nnh\textstyle{\int}_{\hskip-3\varOmega}\w
e^{\lg x,\yy\hs\rg}\hh dP\nh$, 
where\/ $\,x=(x\hn_1\w,\dots,x\nh_d\w)\,$ and\/
$\,\lg x,\yy\hs\rg=x\hn_1\w\yo+\ldots+x\nh_d\w\yd$, 
for the data\/ $\,\ew,\alpha,\lr\,$ in\/ {\rm(\ref{tbg})} consisting of\/
$\,\ew\nh=\bbR\hn^d\nh$, its standard inner product\/ $\,\lr$, and\/
$\,\alpha\,$ with the components\/  
$\,\alpha\hn_{i\hn jk}\w=\lg\alpha(e\hn_i\w,e\nh_j\w),e\hn_k\w\rg$,  
in the standard or\-tho\-nor\-mal basis 
$\,e\hn_1\w,\dots,e\nh_d\w$, 
equal to the coefficients\/ $\,\alpha\hn_{i\hn jk}\w$ 
in\/ {\rm(\ref{uix})}.
\end{prop}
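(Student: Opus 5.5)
This is Lemma 6 of \cite{stan-catrina}. The plan is to translate the operator identity (\ref{uix}) into relations among moments of $\,\yy$, and then into the PDE (\ref{nph}) for $\,\varphi$.

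\emph{Step 1: a consequence of (\ref{uix}).} Apply (\ref{uix}) to $\,1$, i.e.\ to the vector $\,1\in G\nh_0\w$. Since $\,a_i^-1=0$ and $\,a_i^01=E[\yi]=0\,$ by (\ref{nrm}), we get $\,\ui1=0$. Hence
\[
[\ui,\yj]1=\ui\yj=a_i^-\yj+\tfrac12a_i^0\yj.
\]
By (\ref{nrm}), $\,\yj\in G\nh_1\w$. The term $\,a_i^-\yj=E[\yi\yj]=\delta_{ij}$ supplies $\,\beta_{ij}=\delta_{ij}$. The term $\,a_i^0\yj=P\hskip-2.8pt_1\w(\yi\yj)=\sum_k E[\yi\yj\yk]\yk\,$ yields $\,\alpha\hn_{i\hn jk}\w=\tfrac12E[\yi\yj\yk]$, which also explains the total symmetry.

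\emph{Step 2: an identity on all of $\,F$.} The key tool is the $\,L^2$-adjoint. The adjoint of $\,\ui\,$ is the semi-creation operator $\,a_i^++\tfrac12a_i^0$, and $\,\ui+\ui^*=\yi$. From (\ref{uix}) and its adjoint one obtains $\,\ui\,$ as a derivation-like operator. Concretely, for a monomial $\,g=\yy^m$ one has
\[
E[\yi g]=E[(\ui+\ui^*)g]=E[\ui g],
\]
since $\,E[\ui^*g]=\lg\ui^*g,1\rg=\lg g,\ui1\rg=0$. Then commute $\,\ui\,$ past each factor $\,\yj\,$ in $\,g\,$ using (\ref{uix}), and finally use $\,\ui1=0$. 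This produces the recursion
\[
E[\yi\yy^m]=\sum_{j}m_j\Bigl(\sum_k\alpha\hn_{i\hn jk}\w E[\yk\yy^{m-e_j}]+\delta_{ij}E[\yy^{m-e_j}]\Bigr).
\]
Each commutator produces $\,\sum_k\alpha\hn_{i\hn jk}\w\yk+\beta_{ij}\,$ in the slot previously occupied by $\,\yj$, and since the multiplication operators commute, the ordering does not matter.

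\emph{Step 3: passing to $\,\varphi$.} Multiply the recursion by $\,x^m/m!\,$ and sum. The left side becomes $\,\partial_i\varphi$. On the right, the factor $\,m_j\,$ together with the shift $\,m-e_j\,$ converts into multiplication by $\,x_j$. This gives
\[
\partial_i\varphi=\sum_{j,k}\alpha\hn_{i\hn jk}\w x_j\partial_k\varphi+x_i\varphi,
\]
which is exactly (\ref{nph}) in coordinates.

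The main obstacle is analytic: the moment generating function need only exist as a formal power series. One option is to interpret (\ref{nph}) formally and recover convergence from the normal form (\ref{ivt}), as in Proposition~\ref{unqra}, which gives analyticity near $\,0\,$ and hence finiteness of $\,\varphi\,$ near $\,0$. Alternatively, one can bound the moments directly from the recursion by $\,C^{|m|}m!$, which shows that the series has a positive radius of convergence.
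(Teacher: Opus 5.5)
Your proposal is correct. The paper itself gives no argument for this statement. It quotes \cite[Lemma 6]{stan-catrina}, together with \cite[Proposition 2]{stan-catrina} for the total symmetry of the $\alpha_{ijk}$, and adds only the remark that nondegeneracy follows from (\ref{nrm}). Your version is a self-contained reconstruction:
\begin{enumerate}
\item[(a)] Evaluating (\ref{uix}) on the constant $1$ gives $\beta_{ij}=\delta_{ij}$ and $\alpha_{ijk}=\tfrac12E[Y_iY_jY_k]$. Reading off these coefficients uses exactly the linear independence of $1,Y_1,\dots,Y_d$ that the paper's remark provides. Under (\ref{nrm}) you therefore get total symmetry for free, instead of citing \cite[Proposition 2]{stan-catrina}.
\item[(b)] The identity $E[Y_ig]=E[U_ig]$, combined with commuting $U_i$ through $Y^m$ and $U_i1=0$, yields a moment recursion that is (\ref{nph}) coefficient by coefficient.
\end{enumerate}
The citation buys brevity. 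Your route buys transparency, and it makes explicit a point the statement takes for granted: $\varphi$ must be finite near $0$, which finite moments of all orders do not guarantee by themselves.

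Two details in that analytic step need tightening.

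\emph{Your first option as worded relies on Proposition~\ref{unqra}, which presupposes an actual $C^2$ solution.} Phrase it formally instead. The moment series $\Phi=\sum_mE[Y^m]x^m/m!$ satisfies $\Phi(0)=1$, and your recursion gives $\nabla\log\Phi=(\mathrm{Id}-\alpha_x)^{-1}x$ as formal power series. The right-hand side converges near $0$, so $\log\Phi$ converges, and hence so does $\Phi$. Your second option also works: set $M_n=\max_{|m|=n}|E[Y^m]|$ and $A=\max_{i,j}\sum_k|\alpha_{ijk}|$. The recursion gives $M_{n+1}\le nAM_n+nM_{n-1}$, hence $M_n\le C^nn!$.

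\emph{In either case you need one more line to pass from convergence of $\Phi$ to finiteness of $\varphi$.} Take $x$ in a polydisc where $\Phi$ converges absolutely, and put $Z=\langle x,Y\rangle$. Monotone convergence gives $E[e^{|Z|}]\le2E[\cosh Z]=2\sum_nE[Z^{2n}]/(2n)!$. This is bounded by $2\sum_m|x^m|\,|E[Y^m]|/m!<\infty$, since $E[Z^n]=\sum_{|m|=n}\frac{n!}{m!}x^mE[Y^m]$. It follows that $\varphi$ is real-analytic near $0$ with Taylor series $\Phi$. The coefficientwise identity then gives (\ref{nph}) for $\varphi$ itself.
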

The assumption that $\,\yy\,$ is nondegenerate, made in 
\cite[Lemma 6]{stan-catrina}, may be dropped, as it follows from (\ref{nrm}).
Namely, if $\,I\nh:F\to F\hs$ were a linear combination
of the multiplication operators in (\ref{mlt}), the equality 
$\,I=s\hn_1\w\yo+\ldots+s\hn_d\w\yd$ applied to each $\,\yi$ 
would, via (\ref{nrm}) combined with integration, give $\,s\hn_i\w=0$.

\section*{Appendix. The inverse Laplace transforms for {\rm(\ref{cmp})} with
$\,d\le3$}
\setcounter{section}{10}
\setcounter{equation}{0}
For the reader's convenience, we describe here the inverse La\-place
transforms, appearing in \cite{stan-catrina},
of the functions $\hs\varphi\hs$ with (\ref{fcs}) -- (\ref{cmp}) 
and $\,\varphi(0)=1$.

If $\,d=1$, as $\,\varphi:(-\infty,1)\to\bbR\,$ is given by 
$\,\varphi(x)=e^{-x}\nnh/(1-x)$, from (\ref{lap}),
\begin{equation}\label{one}
\varphi\,\mathrm{\ is,\ obviously,\ the\ La\-place\ transform\ of\
}\,y\mapsto e^{-y-1}\hn u(y+1),
\end{equation}
$u\,$ being the unit-step (Heaviside) function. Next,
as one sees using the new variable $\,\sqrt{1-x\hs}\sqrt{y+1/2\hs}\,$ and the
Gauss\-i\-an integral
$\,\int_{\nh-\nh\infty}^\infty e^{-\xi^2}\nh d\xi=\sqrt{\pi\hs}$,
if $\,x\in(-\infty,1)$,
\begin{equation}\label{mhf}
\displaystyle{\int}_{\hskip-5pt-\nnh1\nh/2}^\infty\,
\displaystyle{\frac{e^{(x-1)y}}{\sqrt{y+1/2}}}\,dy\,
=\,\sqrt{\pi\hs}\,\displaystyle{\frac{e^{(1-x)/2}}{\sqrt{1-x}}}\hs.
\end{equation}
Consequently, the function $\,\varphi\,$ in (\ref{cmp}), given,
for $\,d=2$, by
\begin{equation}\label{fxx}
\begin{array}{l}
\varphi(x,\xi)\,\hs
=\hs\hs\,e^{-\xi}\hn[(1-\xi)^2\nnh-x^2]^{-\nnh1/2}\mathrm{\nh,\hs\ equals\hs\
the\hs\ La\-place\hs\ transform\hs\ of}\\
\rto\nh\nnh\ni\hn\nh
(y,\eta)\nh\mapsto\nh(\pi e)^{-\nnh1}\hn e^{-\eta}\hh
[(1+\eta)^2\nnh-y^2]^{-\nh1/2}\hh u(1+\eta+y)u(1+\eta-y).
\end{array}
\end{equation}
In fact, in terms of the new variables
$\,\lambda=(\eta+y)/2\,$ and $\,\vt=(\eta-y)/2$, with the Ja\-cob\-i\-an
determinant
$\,\partial\hs(y,\eta)/\partial\hh(\lambda,\vt)=2$, and
$\,xy+\xi\eta=(\xi+x)\lambda+(\xi-x)\vt$, 
the integral of the second line of (\ref{fxx}) 
multiplied by $\,e\hh^{xy+\xi\eta}\hs dy\,d\eta\,$ has separated
variables, and hence equals
$\,(\pi e)^{-\nnh1}$ times the product of two integrals, namely,  of 
$\,e^{(\xi-1+x)\lambda}\hn(\lambda+1/2)^{-\nh1/2}\hh u(\lambda+1/2)\,
d\lambda\,$ and $\,e^{(\xi-1-x)\evt}\hn(\vt+1/2)^{-\nh1/2}\hh u(\vt+1/2)\,
d\vt\,$ which, by
(\ref{mhf}) with $\,(x,y)\,$ replaced by $\,(\xi+x,\lambda)\,$ or
$\,(\xi-x,\vt)$, are equal, respectively, to
$\,\sqrt{\pi\hs}\hs e^{(1-\xi\mp x)/2}\hn(1-\xi\mp x)^{-\nh1/2}\nnh$. Thus, 
their product multiplied by $\,(\pi e)^{-\nnh1}$ coincides with 
the right-hand side in the first line of (\ref{fxx}). 

For $\,d=3\,$ the inverse La\-place transform of 
the function $\,\varphi\,$ in (\ref{cmp}) is {\it not\/} a function, but
rather a measure supported in a half of a Lo\-rentz\-i\-an light cone 
\cite[Proposition 7]{stan-catrina}, as described below.

Generally, given a $\,d$-di\-men\-sion\-al oriented, time-orient\-ed
Lo\-rentz\-i\-an vector space $\,\ew$
with the $\,-\nnh+\ldots+\,$ inner product $\,(\,,\hh)\,$ and
the volume form $\,\varTheta\nh$, the future light cone
$\,\partial\hh C\hskip-2pt_+\w\nnh\subseteq\ew\,$ is well known
\cite[p.\,62]{kolk-varadarajan} to carry a canonical smooth measure $\,\varLambda$,
invariant under the iso\-chronous Lo\-rentz group. Explicitly: 
$\,\varLambda_y\w$, as an exterior $\,(d-1)$-form,
at any  $\,y\in\partial\hh C\hskip-2pt_+\w$, equals the
restriction to $\,y^\perp$ of $\,\varTheta(y'\nnh,\,\cdot\,,\dots,\,\cdot\,)$,
where the Lo\-rentz\-i\-an or\-thog\-o\-nal complement $\,y^\perp$ obviously
coincides with the tangent space of $\,\partial\hh C\hskip-2pt_+\w$ at
$\,y$, and $\,y'\in\ew$ is any vector such that $\,(y,y')=-\nnh1$. For obvious
dimensional reasons, due to uniqueness of $\,y'$ up to adding vectors from
$\,y^\perp\nnh$, this definition is correct in the sense that
$\,\varLambda_y\w$ does not depend on the choice of $\,y'\nh$.

The push-for\-ward of $\,\varLambda\,$ under the or\-thog\-o\-nal projection
$\,\partial\hh C\hskip-2pt_+\w\nnh\to W\smallsetminus\{0\}\,$ onto any
spacelike hyperplane $\,W\nh\subseteq\ew\,$ equals $\,dw/|w|$, in terms
of the variable $\,w\in W\nh$, which in turn becomes
\begin{equation}\label{tht}
\varLambda\,=\,r\hh^{d-3}\hs dr\,d\zeta
\end{equation}
if one further identifies $\,W\hn$ with $\,\bbR\hn^{d-1}$, letting 
$\,r\,$ and $\,\zeta$ range, respectively, over $\,[\hs0,\infty)\,$ and
the unit sphere $\,S\hh^{d-2}$ centered at $\,0\,$ in $\,\bbR\hn^{d-1}\nnh$.

For our purposes $\,\ew\nh=\bbR\hn^d$ and the future light cone 
$\,\partial\hh C\hskip-2pt_+$ is given by $\,1+\eta>r$ with 
$\,\eta=y\nh_d\w$ and $\,r\ge0\,$ such that 
$\,r^2\nnh=\textstyle{\sum}\hs_{i\hh<\hn d}\w\,y_i^2$,
where $\,y=(y\hn_1\w,\dots,y\hn_d\w)$. Thus, $\,(0,\dots,0,-\nnh1)\,$ serves
as the origin (vertex of the cone). The measure $\,\varLambda\,$ on
$\,\partial\hh C\hskip-2pt_+$ is, by (\ref{tht}), the push-for\-ward, under
the mapping $\,S\hh^{d-2}\nh\times(0,\infty)
\to\partial\hh C\hskip-2pt_+\w\subseteq\bbR\hn^d$ which sends 
$\,(\zeta,r)\,$ to $\,(r\zeta,\hh r-1)$, 
of the standard product measure on $\,S\hh^{d-2}\nh\times(0,\infty)$
multiplied by $\,r\hh^{d-3}\nh$. When $\,d=3$, in the notation of (\ref{inv}), 
\begin{equation}\label{ltf}
\begin{array}{l}
\mathrm{our\hs\ }\,\,\varphi\,\hs
=\,\,e^{-\xi}\hh[(1-\xi)^2\nnh-s^2]^{-\nnh1/2}\,\,\mathrm{\ 
is\ the\ La\-place\ trans}\hyp\\
\mathrm{form\ of\ the\ product\ of\ }\hh\varLambda\mathrm{\ and\ the\
function\ }\hh(2\pi e)^{-\nnh1}\nh e^{-\eta}\nh.
\end{array}
\end{equation}
We will derive (\ref{ltf}) from the following fact, 
verified in the next paragraph: 
whenever $\,\xi,s\in\bbR\,$ and $\,1-\xi>s\ge0$, so that
$\,1-\xi-s\cos\theta>0$,
\begin{equation}\label{rsd}
\displaystyle{\int}_{\hskip-5pt0}^\pi
\displaystyle{\frac{d\theta}{1-\xi-s\cos\theta}}\,
=\,\pi\hs[(1-\xi)^2\nnh-s^2]^{-\nnh1/2}\nh.
\end{equation}
Namely, by  continuity, we are free to assume that $\,s>0$. 
The left-hand side of (\ref{rsd}) equals one-half of the analogous 
integral over $\,[\hs0,2\pi]$, and so it coincides with $\,1/2$ times 
the contour integral over the circle $\,|z|=1\,$ in $\,\bbC\,$ of the 
hol\-o\-mor\-phic  $\,1$-form $\,f(z)\,dz$, where
$\,f(z)=(iz)^{-\nnh1}\nh/[1-\xi-s\hh(\nh z+\nnh z^{-\nnh1}\nh)/2]
=2i/[s\hn z^2\nnh+2(\xi-1)z+s\hh]$.
The polynomial $\,z\mapsto s\hn z^2\nnh+2(\xi-1)z+s\,$ 
has the roots 
$\,z\hn_\pm\w\nh=\nh(1-\xi\pm[(1-\xi)^2\nnh-s^2]^{1/2})\nh/\nnh s$
and $\,0<z_-\w\nnh<1<z_+\w$. In fact, $\,z_+\w>1\,$ as $\,1-\xi>s$, 
while $\,0<z_-\w\nnh<1\,$ since $\,z_+\w z_-\w=s/s=1$. 
Consequently, the integral in (\ref{rsd}) equals
$\,\pi i\,$ times the residue of $\,f(z)\,$ at $\,z=z_-\w$. The residue, being 
clearly the limit of $\,2i(z-z_-\w)/[sz^2\nnh+2(\xi-1)z+s]$ as 
$\,z\to z_-\w$, is thus $\,2i/[s(z_-\w\nh-z_+\w)]
=-i[(1-\xi)^2\nnh-s^2]^{-\nnh1/2}\nh$, and (\ref{rsd}) follows.

Denoting by $\,\sm$ the 
$\,m$-di\-men\-sion\-al volume of the unit $\,m$-sphere $\,S^m$ 
centered at $\,0\,$ in $\,\bbR\hn^{m+1}\nnh$, we see
that, for any 
fixed vector $\,\chi\in\bbR\hn^{m+1}$ with $\,|\chi|=t$, the integral over
$\,S^m$ of the function $\,\zeta\mapsto e^{\lg\chi,\zeta\rg}$ equals
$\,\smo\hh Q\nh_m\w(t)$, where
\begin{equation}\label{qmp}
Q\nh_m\w(t)\,=
\nnh\displaystyle{\int}_{\hskip-5pt0}^\pi e\hh^{t\cos\theta}\nh
\sin^{m-1}\nnh\nh\theta\,d\theta\mathrm{,\ \ and\ so\ \
}\,\smo Q\nh_m\w(0)=\sm,
\end{equation}
while $\,\smo=2\pi^{m/2}\nh/\hh\Gamma(m/2)$. 
Given $\,x,y\in\bbR\hn^d\nh$, let us now write $\,x=(v,\xi)\,$ and
$\,y=(w,\eta)$,
where $\,v,w\in\bbR\hn^{d-1}$ and $\,\xi,\eta\in\bbR$, as well as 
$\,s=|v|\,$ and $\,r=|w|$.

In any dimension $\,d\ge3$,  
the La\-place transform of the measure
$\,(2\pi e)^{-\nnh1}\nh e^{-\eta}\hh\varLambda$, mentioned in (\ref{ltf})
for $\,d=3$, 
depends on $\,x\in\bbR\hn^d$ only through $\,\xi\,$ and $\,s\hh$: due to 
(\ref{tht}), 
with $\,v=r\zeta$, where $\,\zeta\in S^{d-2}\nh$, as $\,\eta=r-1$ on
$\,\partial\hh C\hskip-2pt_+$, the
transform
\[
\mathrm{sends\ }\,x=(v,\xi)\,\mathrm{\ to
}\,\,(2\pi e)^{-\nnh1}\nh\nnh\displaystyle{\int}_{\hskip-5pt0}^\infty
\hh\displaystyle{\int}_{\hskip-5.5ptS^{d-2}}\w
e^{r\lg v,\zeta\rg}\hs e\hh^{(\xi-1)(r-1)}\nh r\hh^{d-3}\hs d\zeta\,dr,
\]
which amounts to
$\,(s,\xi)\,\mapsto(2\pi e)^{-\nnh1}\hh\sdt
\hh\textstyle{\int}_{\hskip-1.8pt0}^\infty
Q\nh_{d-2}\w(r\nh s)\,e\hh^{(\xi-1)(r-1)}r^{d-3}\nh d\hh r\,$ 
for $\,Q\nh_{d-2}\w$ and $\,\sdt$ as in (\ref{qmp}) 
and, using (\ref{qmp}), we rewrite it as
\[
(s,\xi)\,\mapsto\,(2\pi)^{-\nnh1}\hn e^{-\xi}\hn\sdt
\nnh\displaystyle{\int}_{\hskip-5pt0}^\pi
\displaystyle{\int}_{\hskip-5pt0}^\infty
e^{r(\xi-1+s\cos\theta)}\hs r\hh^{d-3}\hn \sin^{d-3}\nnh\theta\,d\hh r\,
d\theta.
\]
For $\,d=3$, with $\,\sz=2$, this becomes
\[
(s,\xi)\,
\mapsto\,\pi^{-\nnh1}\hn e^{-\xi}\nnh\displaystyle{\int}_{\hskip-5pt0}^\pi
\displaystyle{\int}_{\hskip-5pt0}^\infty
e^{-r(1-\xi-s\cos\theta)}\nh dr\,d\theta.
\]
Integrating over $\,r\,$ first, with $\,1-\xi-s\cos\theta>0$, we obtain
\[
(s,\xi)\,\mapsto
\pi^{-\nnh1}\hn e^{-\xi}\nnh\displaystyle{\int}_{\hskip-5pt0}^\pi
\displaystyle{\frac{d\theta}{1-\xi-s\cos\theta}}\hs.
\]
By (\ref{rsd}), the right-hand side equals 
$\,e^{-\xi}\hh[(1-\xi)^2\nnh-s^2]^{-\nnh1/2}\nh$, proving (\ref{ltf}).

\ \

\noindent{\bf Data Availability.} 
All data generated or analyzed during this study
are included in this published article.

\vskip2pt
\noindent{\bf Conflict of interest.} 
The authors have no relevant financial or
non-financial interests to disclose.


\end{document}